\numberwithin{equation}{section}
\newtheorem{thm}{Theorem}[section]
\newtheorem{lem}[thm]{Lemma}
\newtheorem{prop}[thm]{Proposition}
\newtheorem{cor}[thm]{Corollary}
\newtheorem{assu-nota}[thm]{Assumption--Notation}
\theoremstyle{definition}
\newtheorem{defn}[thm]{Definition}
\newtheorem{rem}[thm]{Remark}
\newtheorem{ex}[thm]{Example}
\newtheorem{qst}[thm]{Question}
\newcommand{\inv}{^{-1}}
\newcommand{\C}{\mathbb C}
\newcommand{\Z}{\mathbb Z}
\newcommand{\pp}{\mathbb P}
\newcommand{\OO}{\mathcal O}
\DeclareMathOperator{\Pic}{Pic}
\DeclareMathOperator{\Hom}{Hom}
\newcommand{\epsi}{\varepsilon}
\newcommand{\al}{\alpha}
\newcommand{\Ga}{\Gamma}
\newcommand{\fie}{\varphi}
\numberwithin{equation}{section}
\title{On the degree of the canonical map of a surface of general type}
\author{Margarida Mendes Lopes  and   Rita Pardini}
 \thanks {Research partially supported by  FCT/Portugal through UID/MAT/04459/2020  and by project PRIN 
 2017SSNZAW$\_$004 ``Moduli Theory and Birational Classification"  of Italian MIUR.  The first author is a member of Centro de An\'alise  Matem\'atica,  Geometria e Sistemas Din\^amicos of T\'ecnico/ Universidade de Lisboa. The second author is a member of GNSAGA of INDAM}
\dedicatory{A Ciro,   maestro e amico.}
\begin{document}

\begin{abstract} Let $X$ be a minimal complex surface of general type such that its  image $\Sigma$ via the canonical map $\fie$ is a surface; we denote by  $d$  the degree of $\fie$.
\par
In this expository work, first of all we recall the known possibilities for $\Sigma$ and $d$ when $\fie$ is not birational, which are quite a few,  and then we consider the question of producing concrete   examples for all of them.  
 We present  the two main methods  of  construction of  such examples and we give several instances of their application. We end the paper outlining the state of the art on this topic and raising several questions. 
\par
\medskip
\noindent{\em 2020 Mathematics Subject Classification:} 14J29. 

\par
\medskip
\noindent{\em Keywords:} surface of general type, canonical map, canonical degree

\end{abstract}
\maketitle

\setcounter{tocdepth}{1}
\tableofcontents

\section{Introduction}

In his  epochal paper  \cite{be}  Beauville  undertook the study of the canonical map of surfaces of general type, bringing to light the great variety of possible behaviours of this map. In spite of some later refinements  (see for instance \cite{xiaodeg}, \cite{xiaopencil})  and of the great many examples in the literature (too many to  be listed here), several  questions  still remain open. 

In this paper  we focus on the case when the canonical image is a surface and the canonical map is not birational, with the aim to offer the reader a quick guide to the topic. In section 2 we summarize the known general results  and  in section 3  we describe the two main methods of construction of examples found in the literature, that is,  generating pairs and abelian covers. In section 4 we apply the methods of section 3 to construct an assortment of examples, with the aim of giving a taste of how the methods  work. 
In section 5 we  collect some  final remarks  and  several  open questions. 

We have tried to keep the exposition reader's friendly, hoping  that the paper may serve as an introduction to this fascinating topic, besides being a useful reference for surface experts. 

\medskip

\noindent {\bf Notations and Conventions:} We work over the complex numbers. All varieties are assumed to be projective and irreducible, unless otherwise specified. 
For a product variety $X_1\times X_2$  we denote by $p_i\colon X_1 \times X_2\to X_i$, $i=1,2$, the two projections and, given $L_i\in \Pic(X_i)$, we denote by $L_1\boxtimes L_2$ the line bundle $p_1^*L_1\otimes p_2^*L_2$ on $X_1\times X_2$.  We denote linear equivalence of divisors by $\equiv$. \par 
For a smooth surface $X$ we write as usual  $p_g(X):=h^0(K_X)=h^2(\OO_X)$ and $q(X):=h^0(\Omega^1_X)=h^1(\OO_X)$; in case $X$ is singular we set $p_g(X)=p_g(Y)$ and $q(X)=q(Y)$ for $Y$ any desingularization of $X$.   As usual  an effective   divisor on a surface is  said to be  {\it  normal crossings}  if its only singularities are ordinary  double points  and it is said to be  {\it  simple normal crossings}  if it is normal crossings and in addition   all its  irreducible components are  smooth.
\section{The canonical map}
In this section, we denote by $X$ a surface of general type with $p_g:=p_g(X)\ge 3$ and by $\fie\colon X \to   \pp^{p_g-1}$ the canonical map; we assume that the image $\Sigma$ of $\fie$ is a surface and we set $d:=\deg \fie$.
\subsection{The canonical image}\label{ssec: image}
The main result on the canonical image in  case it is a surface is due to Beauville and  is the following:
\begin{thm}[\cite{be}, Thm.~3.1]\label{thm: be-pg}
There are the following possibilities:
\begin{itemize}
\item[\rm(A)] $p_g(\Sigma)=0$;
\item[\rm(B)] $p_g(\Sigma)=p_g(X)$ and $\Sigma$ is a canonical surface, namely it is  the canonical image of a surface of general type $Z$ with birational canonical map. 
\end{itemize} 

\end{thm}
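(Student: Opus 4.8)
The plan is to pass to smooth birational models and to reduce the statement to a comparison of two linear systems. First I would resolve the indeterminacy of $\fie$ and desingularize the image: choose a smooth projective surface $X'$ with a birational morphism $X'\to X$ on which $\fie$ becomes a morphism, and a desingularization $S\to\Sigma$, so as to obtain a generically finite morphism $f\colon X'\to S$ of degree $d$ between smooth surfaces. Writing $H$ for the pullback to $S$ of the hyperplane class of $\pp^{p_g-1}$, by construction $f^*H$ is the moving part of $|K_{X'}|$; I would then record the equalities $h^0(S,H)=p_g(X)$ — using that $\Sigma$ is nondegenerate together with $h^0(X',f^*H)=h^0(X',K_{X'})=p_g(X)$ — and $K_{X'}\equiv f^*H+N$, where $N\ge 0$ is the fixed part of $|K_{X'}|$.

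Next I would set up the key comparison. Pullback of holomorphic $2$-forms gives an injection $f^*\colon H^0(S,K_S)\into H^0(X',K_{X'})=H^0(X,K_X)$, so that $p_g(\Sigma)=h^0(S,K_S)\le p_g(X)=h^0(S,H)$; if $h^0(S,K_S)=0$ we are in case (A). Assume therefore $h^0(S,K_S)>0$. Writing $K_{X'}\equiv f^*K_S+R$ with $R\ge 0$ the ramification divisor and comparing with $K_{X'}\equiv f^*H+N$ gives $f^*(K_S-H)\equiv N-R$. For a nonzero $2$-form $\omega$ on $S$ the canonical divisor $\divis(f^*\omega)=f^*\divis(\omega)+R$ must contain the fixed part $N$; since $\fie$ is the canonical map, $f^*\omega$ is, up to the fixed section, the pullback of a hyperplane section, and I would extract from this an identity of divisors $\divis(\omega)=\divis(h_\omega)+D_0$ on $S$, with $h_\omega\in H^0(S,H)$ and $D_0\equiv K_S-H$ a divisor independent of $\omega$. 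Hence the canonical map of $S$ factors as $S\to\Sigma$ followed by the linear projection away from the center cut out by the subsystem $\{\divis(h_\omega)\}\subseteq|H|$.

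The heart of the matter is to show that this projection is trivial, i.e.\ that $D_0$ is effective and lies in the fixed part of $|K_S|$, so that $|K_S|$ and $|H|$ have the same moving part. Granting this, $h^0(S,K_S)=h^0(S,H)=p_g(X)$, and the canonical map of $S$ coincides with $S\to\Sigma\into\pp^{p_g-1}$, which is birational onto $\Sigma$ because $|H|$ realizes the given embedding; thus $\Sigma$ is a canonical surface (take $Z$ to be a minimal model of $S$) and we are in case (B). The main obstacle is exactly this last step — ruling out the intermediate range $0<p_g(\Sigma)<p_g(X)$, equivalently excluding a nonzero negative part of $D_0$ supported in the base locus of the relevant subsystem of $|H|$. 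I expect a purely numerical argument to be insufficient and that one must use decisively that $\Sigma$ is the \emph{canonical} image: for instance, by restricting to a general hyperplane section $\Gamma\subset\Sigma$ and to its preimage, a general canonical curve $C\subset X'$, and comparing the genus of $\Gamma$ with the value forced by adjunction on $S$, so as to pin down $K_S\equiv H$ modulo the exceptional locus of $S\to\Sigma$.
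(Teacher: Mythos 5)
First, a remark on the comparison: the paper does not prove this statement at all --- it is quoted from Beauville (\cite{be}, Thm.~3.1) --- so your proposal has to be measured against Beauville's own argument. Your reduction is correct as far as it goes, and it reproduces the first half of that argument: resolving the indeterminacy, the two expressions $K_{X'}\equiv f^*H+N$ and $K_{X'}\equiv f^*K_S+R$, the linear injection $\omega\mapsto h_\omega$, the $\omega$-independent divisor $D_0$ with $f^*D_0=N-R$ (unique because $f^*$ is injective on divisors), and the conclusion that the canonical map of $S$ is $g\colon S\to\Sigma\subset\pp^{p_g-1}$ followed by a linear projection. But the proposal is not a proof: as you concede, the entire content of the dichotomy --- excluding $0<p_g(\Sigma)<p_g(X)$, equivalently proving $D_0\ge 0$ --- is left open, and the hyperplane-section/genus comparison you gesture at in the last sentence is not carried out and is again essentially numerical, which you yourself suspect cannot suffice.

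The missing idea is the trace (push-forward) of holomorphic $2$-forms along the generically finite map $f\colon X'\to S$: summing a form over the $d$ sheets defines a linear map $\mathrm{tr}_f\colon H^0(X',K_{X'})\to H^0(S,K_S)$ (holomorphicity across the branch divisor is a local computation, and extension across the remaining codimension-$2$ locus is Hartogs), satisfying $\mathrm{tr}_f\circ f^*=d\cdot\mathrm{id}$ and $\mathrm{tr}_f(f^*(v)\cdot\alpha)=v\cdot\mathrm{tr}_f(\alpha)$ for $v$ a rational function on $S$. This is what turns your one-way comparison into a dichotomy. Indeed, since $\fie$ is the canonical map, every $\omega\in H^0(X',K_{X'})$ can be written as $s_N\cdot f^*g^*\ell$ with $\ell$ a linear form and $s_N$ the fixed section; fixing $\ell_0$ and setting $\tau_0:=\mathrm{tr}_f(s_N\cdot f^*g^*\ell_0)$, the projection formula gives $\mathrm{tr}_f(s_N\cdot f^*g^*\ell)=g^*(\ell/\ell_0)\cdot\tau_0$ for every $\ell$. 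If $\tau_0=0$, then $\mathrm{tr}_f$ vanishes on all of $H^0(X',K_{X'})$, and $\mathrm{tr}_f\circ f^*=d\cdot\mathrm{id}$ forces $p_g(S)=0$: case (A). If $\tau_0\ne 0$, then $\ell\mapsto g^*(\ell/\ell_0)\,\tau_0$ is an injection of the $p_g(X)$-dimensional space of linear forms into $H^0(S,K_S)$ (injective because $g$ is birational and $\Sigma$ is nondegenerate), so $p_g(S)\ge p_g(X)$; combined with your inequality $p_g(S)\le p_g(X)$ this gives equality, every canonical divisor of $S$ is of the form $g^*\divis(\ell)+D_0$ with $D_0=\divis(\tau_0)-g^*\divis(\ell_0)$, and $D_0\ge 0$ because a component with negative multiplicity would have to lie in $g^*\divis(\ell)$ for every $\ell$, impossible for hyperplane sections of a nondegenerate surface. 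At that point your ``granting this'' paragraph correctly finishes case (B). So the architecture you built is sound, but without the trace map --- or some equivalent device manufacturing sections of $K_S$ out of sections of $K_{X'}$ --- the crucial step has no proof.
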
 

As explained in \cite{be} (see Example \ref{ex2A})  for any surface $T$ with $p_g=0$  one can find minimal surfaces of general type $X$ such that  the image of the canonical   map of $X$ is  $T$,  by taking a suitable $\mathbb Z_2$-cover.  On the other hand,  surfaces in case (B)  with $d\geq 2$ were thought for a long time not to exist. The first example  of such a phenomenon is a surface with $d=2$ whose  canonical image is a quintic surface in $\pp^3$ with 20 double points (see \cite{be}, \cite{babbage}). 

\subsection{Bounds on the canonical degree}\label{ssec: bounds}
Here we look at the bounds on the degree $d$ of the canonical map. Some of the examples referred to  below can be found in  \S \ref{sec: examples}.    

Recall that  writing  $K_X\equiv M+Z$, where $Z$ is the fixed part of the canonical linear system $|K_X|$,  one has 
$$M^2\geq d \deg \Sigma.$$

Minimal surfaces $S$ of general type with birational canonical map satisfy $K_S^2\geq 3p_g(S)+ q(S)-7$  (see \cite [ Thm.~3.2]{debarre}). So 
 in case (B)  one has  $\deg \Sigma\geq 3p_g(X)+ q(\Sigma)-7$  whilst in case (A) one has $\deg \Sigma\geq p_g(X)-2$, since $\Sigma$  generates $ \pp^{p_g-1}$.

Assume now that  $X$ is   minimal. Then one has  $K_X^2\geq M^2$ and so  in case (A), 

 $$K^2_X\geq  d(p_g(X)-2)$$

 and in case (B) 
$$K^2_X\geq d(3p_g(X)+ q(\Sigma)-7).$$

 Combining the above inequalities with the Bogomolov-Miyaoka-Yau bound $K^2_X\leq 9\chi(\mathcal O_X)$ one obtains for case (A)
\begin{equation}\label{ineA}  27- 9q(X)\geq   (d-9) (p_g(X)-2)
 \end{equation}
 and for case (B)  
 \begin{equation}\label{ineB}  30-9q(X)-d q(\Sigma)\geq (d-3) (3p_g(X)-7)
 \end{equation}
 
Thus, as already pointed out  in \cite{be},  in case (A)  $d\leq 9$ if $p_g\geq 30 $ and  in case (B)  $d\leq 3$ if $p_g(X)\geq 13$.  Later on Xiao Gang in  \cite{xiaodeg} showed that in fact in case (A) if $p_g(X)>132 $   then deg $\fie\leq 8$.  For degree $8$ there exist  examples with unbounded  $p_g$ 
as shown by Beauville  (see Example \ref{ex8 Be}). More recently in \cite{bin8} Nguyen Bin produced several other such examples. 

Surfaces in case (B)  with $d\geq 2$ are much more difficult to find.  Note  that minimal surfaces in case (B) with $d=2$ must satisfy 
\begin{equation}\label{acB2}  
K^2_X\geq 6p_g(X)+2q(\Sigma)-14
 \end{equation}
whilst if $d=3$ they must satisfy
\begin{equation}\label{acB3}  
K^2_X\geq 9p_g(X)+3q(\Sigma)-21
 \end{equation} and as such are very near the Bogomolov-Miyaoka-Yau bound. Families with unbounded $p_g$  and $d=2$  are known  (see \S \ref{sec: examples}) but  for $d\geq 3$ one knows only sporadic examples.

Many other consequences of the above formulas can be drawn. To  point a few: 

\begin{itemize}

\item [(1)]  $d\leq 9$ whenever $q(X)\geq 3$; 
\item  [(2)] as noted first in \cite{Pe}, the maximum possible degree is 36 and can be attained only if $p_g=3$, $q=0$;
\item  [(3)]  if $q(X)>0$  the maximum possible degree is 27  and can be attained only if $p_g=3$, $q=1$;
\item  [(4)]  in case (B) the maximum possible degree is 9 and can be attained only if $p_g=4$, $q=0$;
\item  [(5)]  in case (B) if $d=3$, $q(X)\leq 3$.
\end{itemize}

 C. Rito  in  \cite{Rito36} uses the Borisov-Keum equations of a fake projective plane and the Borisov-Yeung equations of the Cartwright-Steger surface (the unique known minimal surface of general type with $K^2=9, p_g=q=1$)   to show the existence of a  surface with  $p_g=3, q=0$ and canonical map of degree 36, and of a surface with $p_g=3, q=1$ and canonical map of degree 27 (as in (2) and (3) above). The  first surface is an \'etale $\mathbb{Z}_2^2$-cover of the fake projective plane and the second an \'etale $\mathbb{Z}_3$-cover of the Cartwright-Steger surface.

Let us also remark that (again as explained in \cite{be}) that if $p_g\geq 25$ and  $\Sigma$ has Kodaira dimension $\geq 0$ then $d\leq 4$. This is an immediate consequence of the fact that a surface in $\pp^n$ of degree less than $2n-2$ has  Kodaira dimension $-\infty$  and if  the  degree is $2n-2$  either  it  has  Kodaira dimension $-\infty$ or it is a K3-surface that,  having $p_g=1$,   cannot be the image of a canonical map by Theorem \ref{thm: be-pg}.

\section{Two constructions}\label{sec: constructions}
We describe here the two main constructions used in the literature to produce examples of surfaces with non birational canonical map. 
\subsection{Generating pairs}\label{ssec: Gpair}
The first instance of this construction is due to Beauville (cf. \cite{Cat}, \S2.9), who used it to produce the first known unbounded sequence of surfaces falling in case (B) of Theorem \ref{thm: be-pg}.  Later the construction was studied systematically in \cite{cirifra} and \cite{cirifra2}, producing more unbounded sequences of such examples. 

The idea is to start with a finite map $h\colon V\to W$ of surfaces and a line bundle $L$ on $W$  satifying certain  conditions (cf. Definition \ref{def: Gpair}  below) and use it to construct a sequence  of surfaces  $X_n$, $n\ge 3$ such that the canonical image $\Sigma_n$ of $X_n$ is a surface and:
\begin{itemize}
\item $p_g(\Sigma_n)=0$ and the canonical map of $X_n$ has degree  twice the  degree of $h$ if the general curve $C\in |L|$ is hyperelliptic;
\item  $p_g(\Sigma_n)=p_g(X_n) $ and   the canonical map of $X_n$ has degree equal to the degree of $h$ if the general curve $C\in |L|$ is not hyperelliptic; 
\item $\lim_{n\to\infty}p_g(X_n)=+\infty$
\end{itemize}  
\medskip
 
 More precisely, we  formulate the following:
\begin{defn}\label{def: Gpair} A {\em generating pair of degree $\nu$} is  a pair $(h\colon V\to W, L)$ where  $V$ and $W$ are surfaces with at most canonical singularities, 
 $h\colon V\to W$ is  a finite  morphism of degree $\nu\ge 2$  and  $L$ is  a  line  bundle on $W$ such that   the following conditions are satisfied: 
\begin{itemize}
\item $q(W)=0$, $p_g(W)=p_g(V)$; 
\item $L^2>0$,   $h^0(L)\ge 2$;  
\item the general curve $C\in |L|$ is smooth  and irreducible of genus $g\ge 2$ and $h^*C$ is also smooth;
\item  the natural  pull back map $H^0(K_W+L)\to H^0(K_V+h^*L)$ is an isomorphism. 
\end{itemize} 
\end{defn}
\begin{rem}
In the original construction by Beauville (Example \ref{ex: Gpair1}) the surface $V$ is an   abelian surface with an irreducible principal polarization, $h\colon V\to W$ is the quotient map to the Kummer surface and $L$ is the line bundle induced on $W$ by twice the principal polarization of $V$. 
\end{rem}
\begin{rem}
The definition of generating pair  given here is slightly different from the one in \cite{cirifra}:  we do not require $V$ to be smooth nor the map $h$ to be unramified in codimension 1. On the other hand, for the sake of simplicity, we require $q(W)=0$ instead of deducing this property from the other conditions.  
Very few examples of  generating pairs  $(h\colon V\to W, L)$ are known  (cf. \cite{cirifra}, \S 3), and they fit both definitions. In addition, they all have $p_g(V)>0$. 
In view of the bounds given in \S \ref{ssec: bounds}, Corollary \ref{cor: Gpair} below  implies  that the degree $\nu$ of a generating pair is at most $3$ if $C$ is not hyperelliptic and at most 4 otherwise. In addition, in \cite[\S 7]{cirifra} a detailed analysis  is carried out, showing that also  the numerical possibilities for  $L^2$, $h^0(W,L)$ and the genus $g$  of  $C$ are very limited if $h^*C$ is not hyperelliptic. Note that if $h^*C$ is hyperelliptic, the construction below produces surfaces with canonical map of degree $2\nu$ onto a rational surface.

\end{rem}

Consider now a generating pair $(h\colon V\to W, L)$ of degree $\nu$. Let  $n\ge 3$ be an integer and consider the linear system $|M_n|:=|L\boxtimes \OO_{\pp^1}(n)|$ on $W\times \pp^1$,  let  $\Sigma_n \in |M_n|$ be a general element, let  $X_n\subset V\times \pp^1$ be the preimage of $\Sigma_n$ and let $f\colon X_n\to \Sigma_n$ the induced map. 
We have the following (cf. \cite{cirifra}): 

\begin{prop}\label{prop: Gpair} 
 In the above assumptions and notations, one has:
\begin{enumerate}
\item $\Sigma_n$ and $X_n$ are surfaces of general type with at most canonical singularities;
\item $\Sigma_n$ is minimal if $K_W+L$ is nef and $X_n$ is minimal if $K_V+h^*L$ is nef;
\item $p_g(X_n)=p_g(\Sigma_n)$, hence the canonical map of $X$ is composed with $f$;
\item the canonical image of $\Sigma_n$ is a surface;
\item the canonical map of $\Sigma_n$ is birational if $C$ is not hyperelliptic and it has degree 2 onto a rational surface otherwise. 
\end{enumerate}
\end{prop}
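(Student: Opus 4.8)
The plan is to push everything through the two projections from the threefolds $W\times\pp^1$ and $V\times\pp^1$ together with the adjunction formula, and to extract the delicate cohomological statement (3) from the defining isomorphism of a generating pair.

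\smallskip
\emph{Adjunction and parts (1)--(2).} Since $K_{W\times\pp^1}=K_W\boxtimes\OO_{\pp^1}(-2)$ and $\Sigma_n\in|L\boxtimes\OO_{\pp^1}(n)|$, adjunction gives
\[
\omega_{\Sigma_n}=\bigl((K_W+L)\boxtimes\OO_{\pp^1}(n-2)\bigr)\big|_{\Sigma_n},
\]
and, because $X_n=(h\times\Id)^*\Sigma_n\in|h^*L\boxtimes\OO_{\pp^1}(n)|$, likewise $\omega_{X_n}=\bigl((K_V+h^*L)\boxtimes\OO_{\pp^1}(n-2)\bigr)|_{X_n}$. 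The second projection makes $\Sigma_n\to\pp^1$ a fibration whose general fibre is a general $C\in|L|$, of genus $g\ge2$. I would check that $\Sigma_n$ is of general type by computing on the threefold
\[
K_{\Sigma_n}^2=n(K_W+L)^2+2(n-2)(2g-2)
\]
and arguing that $\omega_{\Sigma_n}$ is big (positivity of this number and bigness hold under the generating-pair hypotheses; cf. \cite{cirifra}); that the general member $\Sigma_n$ and its preimage $X_n$ have at worst canonical singularities is a Bertini-type analysis along $\mathrm{Sing}(W)\times\pp^1$ and the base locus, which I would import from \cite{cirifra}. For (2): if $K_W+L$ is nef then $(K_W+L)\boxtimes\OO_{\pp^1}(n-2)$ is nef on $W\times\pp^1$ (box product of nef bundles, $n-2\ge1$), so its restriction $\omega_{\Sigma_n}$ is nef; being also big, $\Sigma_n$ is then a minimal surface of general type, and symmetrically for $X_n$ when $K_V+h^*L$ is nef.

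\smallskip
\emph{Part (3), the heart.} I would compare the restriction sequence
\[
0\to K_W\boxtimes\OO_{\pp^1}(-2)\to(K_W+L)\boxtimes\OO_{\pp^1}(n-2)\to\omega_{\Sigma_n}\to0
\]
with its analogue on $V\times\pp^1$, the two being linked by pullback along $h\times\Id$. By Künneth, $H^0(\OO_{\pp^1}(-2))=0$ and $H^1(\OO_{\pp^1}(-2))\cong\C$ give: the $H^0$ of the left-hand sheaf vanishes, its $H^1$ is $H^0(K_W)$ (resp.\ $H^0(K_V)$), and the $H^0$ of the middle sheaf is $H^0(K_W+L)\otimes H^0(\OO_{\pp^1}(n-2))$ (resp.\ with $V$). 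The generating-pair hypotheses make the outer vertical maps isomorphisms: $p_g(W)=p_g(V)$ forces $H^0(K_W)\cong H^0(K_V)$ (pullback of canonical forms under a finite map is injective in characteristic $0$, hence bijective by equality of dimensions), and the defining isomorphism $H^0(K_W+L)\cong H^0(K_V+h^*L)$ handles the middle term. The remaining vertical map, on $H^1$ of the middle sheaf $\cong H^1(K_W+L)\otimes H^0(\OO_{\pp^1}(n-2))$, is injective because the trace splitting $h_*\OO_V=\OO_W\oplus E$ (valid in characteristic $0$), combined with relative duality $h_*\omega_V=\homc(h_*\OO_V,\omega_W)$, exhibits $H^1(K_W+L)$ as a direct summand of $H^1(K_V+h^*L)$. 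A five-lemma chase then yields that $f^*\colon H^0(\omega_{\Sigma_n})\to H^0(\omega_{X_n})$ is an isomorphism; in particular $p_g(X_n)=p_g(\Sigma_n)$, and since every canonical form on $X_n$ is pulled back from $\Sigma_n$ the canonical map of $X_n$ is composed with $f$.

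\smallskip
\emph{Parts (4)--(5).} I would restrict the canonical map of $\Sigma_n$ to a general fibre $C$. As $C$ is a fibre, $C^2=0$, so $\omega_{\Sigma_n}|_C\cong K_C$; moreover $(K_W+L)|_C=K_C$, and since $q(W)=0$ yields $H^1(K_W)=0$ (Serre duality), the restriction $H^0(K_W+L)\to H^0(K_C)$ is onto. Hence the canonical system of $\Sigma_n$ cuts out on $C$ exactly $|K_C|$, so $\fie_{\Sigma_n}|_C$ is the canonical map of the curve $C$, non-constant as $g\ge2$; as the sections from $H^0(\OO_{\pp^1}(n-2))$ separate the fibres, the canonical image is a surface, proving (4). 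For (5): if $C$ is not hyperelliptic its canonical map is an embedding, so $\fie_{\Sigma_n}$ is injective on general fibres and separates fibres, hence birational; if $C$ is hyperelliptic the canonical map of $C$ is $2$-to-$1$ onto a rational normal curve, so $\fie_{\Sigma_n}$ has degree $2$, and its image, being ruled over $\pp^1$ by these rational curves, is a rational surface.

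\smallskip
\emph{Main obstacle.} The crux is (3): the exact equality $p_g(X_n)=p_g(\Sigma_n)$ is precisely where the defining isomorphism of the generating pair is indispensable, and the one non-formal point in the diagram chase is the injectivity of the $H^1$-comparison, which rests on the trace splitting of $h_*\OO_V$. By contrast the singularity bookkeeping in (1) is technically fussy but conceptually routine and can be quoted from \cite{cirifra}.
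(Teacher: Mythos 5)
Your argument is correct, and for parts (1), (2), (4) and (5) it is essentially the paper's own: adjunction on $W\times\pp^1$ and $V\times\pp^1$, Bertini plus the hypotheses on $|L|$ (with the singularity bookkeeping deferred to \cite{cirifra}, exactly as the paper does), and, for (4)--(5), surjectivity of $H^0(K_W+L)\to H^0(K_C)$ coming from $q(W)=0$, so that the canonical map of $\Sigma_n$ restricts on a general fibre to the canonical map of $C$ and separates the fibres. The genuine difference is in part (3). The paper proves it by a dimension count: $L$, hence $L\boxtimes\OO_{\pp^1}(n)$, is nef and big, so Kawamata--Viehweg vanishing kills $H^1$ of the middle terms of both restriction sequences and yields the closed formulas $p_g(\Sigma_n)=(n-1)h^0(K_W+L)+p_g(W)$ and $p_g(X_n)=(n-1)h^0(K_V+h^*L)+p_g(V)$, whence equality by Definition \ref{def: Gpair}. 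You instead compare the two restriction sequences via pullback along $h\times\Id$ and run the five lemma, replacing the vanishing theorem by injectivity of pullback on $H^1$, which you deduce from the characteristic-zero trace splitting $h_*\OO_V=\OO_W\oplus E$ together with duality $h_*\omega_V\cong\homc(h_*\OO_V,\omega_W)$, so that $\omega_W\otimes L$ is a direct summand of $h_*(\omega_V\otimes h^*L)$. Both arguments are sound. Yours is more elementary at this point (no vanishing theorem is needed) and it gives directly the slightly stronger statement that $f^*\colon H^0(\omega_{\Sigma_n})\to H^0(\omega_{X_n})$ is an isomorphism, which is literally the assertion that the canonical map of $X_n$ is composed with $f$; the paper's route has the advantage of producing the explicit $p_g$ formulas, which it reuses immediately in Lemma \ref{lem: invariants-Gpair} to compute all the invariants of $X_n$ and $\Sigma_n$. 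Two small points you should tighten if writing this up: first, the split injection furnished by duality must be identified with the pullback-of-forms map that actually occurs as the vertical arrow on $H^1$ in your diagram (this is true, because the trace map on dualizing sheaves is dual to the inclusion $\OO_W\hookrightarrow h_*\OO_V$, but it is exactly what makes the relevant square commute); second, in (1) you need not invoke bigness of $K_{\Sigma_n}$ from \cite{cirifra}, since, as the paper observes, general type follows for free from your own part (4).
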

\begin{proof}[Sketch of proof] We just outline the proof and  refer the reader to \S 2 of \cite{cirifra} for full details.

The statement in (i) on the singularities  follows by  Bertini's theorem because of  the assumptions on $|L|$; the fact that $X_n$ and $\Sigma_n$ are of general type follows from (iv).  

By adjunction, we have  that $K_{\Sigma_n}$ is the restriction  of $(K_W+L)\boxtimes \OO_{\pp^1}(n-2)$ and $K_{X_n}$ is the restriction of $(K_V+h^*L)\boxtimes \OO_{\pp^1}(n-2)$, so (ii) follows immediately.  Standard computations and Kawamata-Viehweg's vanishing give $p_g(\Sigma_n)= (n-1)h^0(K_W+L)+ p_g(W)$ and $p_g(X_n)= (n-1) h^0(K_V+h^*L)+ p_g(V)$, so  (iii) follows immediately from  Definition \ref{def: Gpair}.

Denote by $f_n\colon \Sigma_n\to\pp^1$ the fibration induced by the second projection $W\times\pp^1\to \pp^1$: since $n\ge 3$ it is easy to see that $|K_{\Sigma_n}|$ separates the fibers of $f_n$. In addition, the general fiber $F_n$ of $f_n$ is isomorphic to  the general curve $C\in |L|$ and the restriction map $H^0((K_W+L)\boxtimes \OO_{\pp^1}(n-2))\to H^0(K_F)$ corresponds to the restriction $H^0(K_W+L)\to H^0(K_C)$. The latter map is surjective, since $q(W)=0$, hence the canonical map of $\Sigma_n$ restricts  to the canonical map of $F$ and statements  (iv) and (v) follow immediately. 
\end{proof}

The main consequence of Proposition \ref{prop: Gpair} is that, as explained at the beginning of the section,  a generating pair gives rise to an unbounded sequence of surfaces whose canonical map has a given behaviour:
\begin{cor} \label{cor: Gpair} 
 Let $(h\colon V\to W,L)$ be a generating pair of degree $\nu$ and let $C$ be the general curve of $|L|$. Then:
\begin{enumerate}
\item if $C$ is hyperelliptic, then for all $n\ge 3$ the surface $X_n$ is an example of case (A) of Theorem  \ref{thm: be-pg} with degree $d=2\nu$ and  the canonical image of $X_n$ is a rational surface; 
\item if $C$ is not hyperelliptic, then $X_n$ is an example of case (B) of Theorem  \ref{thm: be-pg} with degree $d=\nu$ and $\Sigma_n$ is the canonical image of $X_n$.
\end{enumerate} 
\end{cor}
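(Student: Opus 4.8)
The plan is to deduce everything from Proposition \ref{prop: Gpair}, since the corollary is essentially a matter of combining its parts and keeping track of degrees. The key structural observation I would record first is that $f\colon X_n\to\Sigma_n$ is a finite morphism of degree $\nu$: indeed, by construction $X_n$ is the preimage of the divisor $\Sigma_n\subset W\times\pp^1$ under the finite degree-$\nu$ morphism $h\times\mathrm{id}\colon V\times\pp^1\to W\times\pp^1$, and the restriction of a finite morphism to the preimage of a subvariety is again finite of the same generic degree.

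Next I would invoke Proposition \ref{prop: Gpair}(iii), which gives $p_g(X_n)=p_g(\Sigma_n)$ together with the assertion that the canonical map of $X_n$ is composed with $f$. Concretely, the pull-back $f^*\colon H^0(K_{\Sigma_n})\to H^0(K_{X_n})$ is injective, and the two spaces having the same dimension it is an isomorphism; hence the canonical map $\fie_{X_n}$ factors as $\fie_{\Sigma_n}\circ f$, where $\fie_{\Sigma_n}$ denotes the canonical map of $\Sigma_n$. In particular the canonical image of $X_n$ coincides with the canonical image of $\Sigma_n$, and by multiplicativity of degrees along this factorization one has $\deg\fie_{X_n}=\deg f\cdot\deg\fie_{\Sigma_n}=\nu\cdot\deg\fie_{\Sigma_n}$.

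It then remains only to feed in Proposition \ref{prop: Gpair}(v). If $C$ is hyperelliptic, $\fie_{\Sigma_n}$ has degree $2$ onto a rational surface, so $\deg\fie_{X_n}=2\nu$ and the canonical image of $X_n$ is that rational surface; being rational it has $p_g=0$, placing $X_n$ in case (A) of Theorem \ref{thm: be-pg}. If instead $C$ is not hyperelliptic, $\fie_{\Sigma_n}$ is birational, so $\deg\fie_{X_n}=\nu$ and the canonical image of $X_n$ is birational to $\Sigma_n$ itself. Since $\Sigma_n$ is then a surface of general type with birational canonical map, this image is a canonical surface with $p_g$ equal to $p_g(\Sigma_n)=p_g(X_n)$, which is precisely case (B).

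I do not expect a genuine obstacle here, as the whole content has been isolated in Proposition \ref{prop: Gpair} and the corollary is a bookkeeping statement. The only point demanding a little care is the identification $\deg f=\nu$ together with the multiplicativity of degrees under the factorization $\fie_{X_n}=\fie_{\Sigma_n}\circ f$; once that is in place, the two cases follow immediately by reading off the degree and the geometry of the canonical image of $\Sigma_n$ from part (v) of the proposition.
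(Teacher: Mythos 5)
Your proposal is correct and follows essentially the same route as the paper: the paper states the corollary as an immediate consequence of Proposition \ref{prop: Gpair}, exactly as you do, combining part (iii) (the canonical system of $X_n$ is pulled back from $\Sigma_n$, so $\fie_{X_n}=\fie_{\Sigma_n}\circ f$ with $\deg f=\nu$) with parts (iv)--(v) and multiplicativity of degrees to read off $d=2\nu$ onto a rational surface (case (A)) in the hyperelliptic case and $d=\nu$ onto the canonical surface $\Sigma_n$ (case (B)) otherwise. Your explicit justification that $f^*\colon H^0(K_{\Sigma_n})\to H^0(K_{X_n})$ is an isomorphism (injectivity plus equality of dimensions) is precisely the bookkeeping the paper leaves implicit.
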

For later reference, we record the following:
\begin{lem} \label{lem: invariants-Gpair}
Let $g$ be  the genus of $C$ and $\bar g$ the genus of $h^*C$; then the surfaces $X_n$ and $\Sigma_n$ have the following invariants:
 \begin{gather*}
 p_g(X_n) =p_g(\Sigma_n)= np_g(W) + (n-1)g, \\
q(\Sigma_n) = 0, \  q(X_n) = q(V)=\bar g-g>0,\\
  K_{\Sigma_n}^2= n(K_W^2-L^2)+8(n-1)(g-1)\\
 K_{X_n}^2 =  n(K_V^2-\nu L^2)+8(n-1)(\bar g -1)
\end{gather*}
\end{lem}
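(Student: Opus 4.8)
The plan is to compute all four invariants from the two ambient presentations $\Sigma_n\in|L\boxtimes\OO_{\pp^1}(n)|$ on $W\times\pp^1$ and $X_n\in|h^*L\boxtimes\OO_{\pp^1}(n)|$ on $V\times\pp^1$, using adjunction, the K\"unneth formula, Riemann--Roch, and the defining properties of a generating pair. For the geometric genera, Proposition~\ref{prop: Gpair} and its proof already give $p_g(\Sigma_n)=(n-1)h^0(K_W+L)+p_g(W)$ and $p_g(X_n)=p_g(\Sigma_n)$, so it suffices to evaluate $h^0(K_W+L)$. Restricting to a general $C\in|L|$ yields $0\to K_W\to K_W+L\to K_C\to 0$ (adjunction, $C$ smooth of genus $g$); since $q(W)=0$, Serre duality gives $H^1(W,K_W)=0$, so $h^0(K_W+L)=p_g(W)+h^0(K_C)=p_g(W)+g$. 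This produces $p_g(\Sigma_n)=p_g(X_n)=np_g(W)+(n-1)g$.

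For the irregularities I would pass through Euler characteristics. From the structure sequence of $\Sigma_n$ in $W\times\pp^1$, K\"unneth and Riemann--Roch on $W$ (using $2g-2=L^2+K_W\cdot L$) give $\chi(\OO_{\Sigma_n})=\chi(\OO_W)+(n-1)(p_g(W)+g)=1+np_g(W)+(n-1)g$, and comparison with $p_g(\Sigma_n)$ forces $q(\Sigma_n)=0$. The identical computation on $V\times\pp^1$ (using $2\bar g-2=(h^*L)^2+K_V\cdot h^*L$ and $(h^*L)^2=\nu L^2$) yields $q(X_n)=n\,q(V)-(n-1)(\bar g-g)$. To finish I would pin down $q(V)$ by restricting to a general $\bar C=h^*C$: the sequence $0\to\OO_V(-\bar C)\to\OO_V\to\OO_{\bar C}\to 0$ gives, after Serre duality, the exact sequence $0\to H^1(K_V+h^*L)^\vee\to H^1(\OO_V)\to H^1(\OO_{\bar C})\to H^0(K_V+h^*L)^\vee\to H^0(K_V)^\vee\to 0$. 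Using $h^0(K_V+h^*L)=h^0(K_W+L)=p_g(V)+g$ (the isomorphism in Definition~\ref{def: Gpair} together with $p_g(V)=p_g(W)$) and the vanishing $H^1(V,K_V+h^*L)=0$, the alternating sum of dimensions collapses to $q(V)=\bar g-g$; substituting into the relation above gives $q(X_n)=q(V)=\bar g-g$. Positivity is Riemann--Hurwitz: $\bar C\to C$ has degree $\nu\ge 2$ and $g\ge 2$, so $\bar g-1\ge\nu(g-1)>g-1$.

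For the self-intersections I would use adjunction on the smooth threefolds. Since $K_{\Sigma_n}$ is the restriction of $D:=(K_W+L)\boxtimes\OO_{\pp^1}(n-2)$, one has $K_{\Sigma_n}^2=D^2\cdot M_n$ computed in $W\times\pp^1$; writing $F$ for the fiber class of $p_2$ (so that $F^2=0$ and the only nonzero triple products are $p_1^*\alpha\cdot p_1^*\beta\cdot F=(\alpha\cdot\beta)_W$), a direct expansion gives $K_{\Sigma_n}^2=nK_W^2+(4n-4)K_W\cdot L+(3n-4)L^2$, which rewrites as $n(K_W^2-L^2)+8(n-1)(g-1)$ via $2g-2=L^2+K_W\cdot L$. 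The same computation on $V\times\pp^1$, with $(h^*L)^2=\nu L^2$ and $2\bar g-2=(h^*L)^2+K_V\cdot h^*L$, yields the stated formula for $K_{X_n}^2$; the canonical singularities of $X_n$ and $\Sigma_n$ do not affect $K^2$.

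The main obstacle is the vanishing $H^1(V,K_V+h^*L)=0$ (and its analogue on $W$), which is precisely what identifies $q(V)$ and cannot be bypassed since $q(V)>0$. I would deduce it from Kawamata--Viehweg vanishing once $h^*L$ is known to be nef and big: it is big because $L^2>0$ and $h$ is finite, and it is nef because $\bar C=h^*C$ is an irreducible curve with $\bar C^2=\nu L^2>0$, an irreducible curve of positive self-intersection being automatically nef; the same argument applied to $C$ shows $L$ is nef. Here one must pass to the minimal resolutions of $V$ and $W$; as the singularities are canonical the discrepancies vanish, so $K$ pulls back to $K$, the resolution shares $p_g$, $q$, $\chi$ and $K^2$, and the vanishing descends. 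The delicate bookkeeping is checking that a general $C$ (respectively $\bar C$) avoids the singular points, so that all the adjunction and intersection computations genuinely take place in the smooth locus.
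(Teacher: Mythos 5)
Your proof is correct and rests on the same pillars as the paper's --- adjunction and intersection theory on $W\times\pp^1$ and $V\times\pp^1$, Riemann--Roch, Kawamata--Viehweg vanishing, and the defining condition $H^0(K_W+L)\cong H^0(K_V+h^*L)$ --- but it is organized differently enough to be worth comparing. The paper applies Kawamata--Viehweg throughout: on the ambient threefolds to get $q(\Sigma_n)=q(W)=0$ and $q(X_n)=q(V)$ directly, and on $W$ and $V$ to get $h^0(K_W+L)=\chi(K_W+L)$ and $h^0(K_V+h^*L)=\chi(K_V+h^*L)$, after which Riemann--Roch and the defining condition give $q(V)=\bar g-g$. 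You instead evaluate $h^0(K_W+L)=p_g(W)+g$ by restriction to $C$ (using only $q(W)=0$) and extract the irregularities from K\"unneth/Euler-characteristic computations, so that the vanishing theorem is needed at exactly one point, $H^1(V,K_V+h^*L)=0$; this more elementary bookkeeping also yields the positivity $\bar g-g>0$ via Riemann--Hurwitz, which the paper's proof leaves implicit. One step of yours must, however, be reordered: you justify nefness of $h^*L$ by citing irreducibility of $h^*C$, but Definition \ref{def: Gpair} only grants that $h^*C$ is smooth. The correct order is the one contained in your own parenthetical: $C$ is irreducible with $C^2=L^2>0$, so $L$ is nef and big, hence $h^*L$ is nef and big because $h$ is finite; Kawamata--Viehweg then gives $H^0(\OO_V(-h^*C))=H^1(\OO_V(-h^*C))=0$, so $h^0(\OO_{h^*C})=1$, i.e.\ $h^*C$ is connected and (being smooth) irreducible, which is what legitimizes both your exact sequence on $V$ and the Riemann--Hurwitz argument. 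With that reordering, your proof is complete at --- indeed slightly above --- the level of detail of the paper's.
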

\begin{proof}
Definition \ref{def: Gpair}  implies that $L$ is nef and big, so  $L\boxtimes \OO_{\pp^1}(n)$ is also nef and big, and  using Kawamata-Viehweg's vanishing one obtains easily $q(\Sigma_n)=q(W)=0$,  $q(X_n)=q(V)$,  $h^0(K_W+L)=\chi(K_W+L)$ and $h^0(K_V+h^*L)=\chi(K_V+h^*L)$. Riemann-Roch and the condition $h^0(K_W+L)=h^0(K_V+h^*L)$ in Definition \ref{def: Gpair} now give $q(V)=\bar g-g$.  Then the remaining formulae are  easily computed, recalling from  the proof of Proposition \ref{prop: Gpair} that   $p_g(\Sigma_n)=(n-1)h^0(K_W+L)+ p_g(W)$ and applying the adjunction formula to compute $K^2_{\Sigma_n}$ and $K^2_{X_n}$.
\end{proof}
\begin{rem} 
Lemma \ref{lem: invariants-Gpair}  shows that all the examples of  surfaces with non birational canonical map arising from a generating pair are irregular. However, in  \cite{cirifra2} unbounded sequences $Y_n$  of  surfaces as in case (B) of Theorem \ref{thm: be-pg} with  $d=2$ and $q(Y_n)=0$ are constructed: one starts with a carefully chosen sequence $X_n$ of surfaces arising from a generating pair and admitting a $\Z_3$-action, and sets $Y_n:=X_n/\Z_3$. The action of $\Z_3$ kills the irregularity but preserves the features of the canonical map. 
\end{rem}
\subsection{Abelian covers}\label{ssec: AbCovers}
The standard reference for abelian covers is \cite{ritaabel}; in this section we summarize the properties we need and analyze in detail the base locus of the canonical system of a cover.

Let $G$ be a finite abelian group; we denote by $G^*$ the group $\Hom(G,\C^*)$ of characters of $G$.
 Here by a  {\em $G$-cover} we mean  a finite map  $f\colon X\to Y$ that is the quotient map for a faithful $G$-action on $X$, with  $Y$ irreducible and $X$  connected. If we assume $X$ normal and $Y$ smooth, then $f$ is automatically flat and  the $G$-action induces a decomposition 
\begin{equation}\label{dec}
 f_*\OO_X=\OO_Y\oplus\ \bigoplus_{\chi\ne 1}L_{\chi}\inv,
\end{equation}
where the  $L_{\chi}$ are line bundles. Note that $L_{\chi}$ is non-trivial for $\chi\ne1$ by the assumption that $X$ be connected.
Since $f$  is flat,  the  (reduced) branch locus   of $f$ is a divisor $D$. For a component $\Delta$  of $D$ we define  the {\em inertia subgroup} of $\Delta$ as  the subgroup  $H<G$ consisting of the elements fixing $f\inv(\Delta)$ pointwise. The group $H$ is cyclic and the natural representation of $H$ on the normal bundle to $f\inv(\Delta)$ at a general point defines a generator  $\psi$ of $H^*$.  
So we can group together the components of $D$ with  same inertia group and character and write  $D=\sum_{(H,\psi)}D_{(H,\psi)}$, where $(H,\psi)$ runs over all the pairs (cyclic subgroup $H<G$, generator of $H^*$). The line bundles $L_{\chi}$ and the effective divisors $D_{(H,\psi)}$ are the {\em building data} of the cover and satisfy the following {\em fundamental relations}: 
\begin{equation}\label{eq: fund-rel}
L_{\chi}+L_{\chi'}\equiv L_{\chi\chi'}+\sum \epsi_{\chi,\chi'}^{(H,\psi)}D_{(H,\psi)}, \quad \forall
\chi,\chi'\in G^*,
\end{equation}
where the coefficient $\epsi_{\chi,\chi'}^{(H,\psi)}$ is defined as follows.
Denote by $m_H$ the order of $H$. Let $r_{\chi}^{(H,\psi)}$ be the smallest nonnegative integer
such that 
$$\chi|_H=\psi^{r_{\chi}^{(H,\psi)}}.$$ Then we have  $\epsi_{\chi,\chi'}^{(H,\psi)}=1$ if 
$r_{\chi}^{(H,\psi)}+r_{\chi'}^{(H,\psi)}\ge m_H$ and $\epsi_{\chi,\chi'}^{(H,\psi)}=0$ otherwise.

The building data encode all the information on a $G$-cover, as explained in the following structure theorem:
\begin{thm}[\cite{ritaabel}, Thm. 2.1]\label{thm: structure} (Notation as above).\newline
Let $Y$ be a smooth irreducible projective variety, let  $\{L_{\chi}\}_{\chi\ne 1}$ be line bundles of $Y$ such that $L\chi\ne
\OO_Y$ for every $\chi$
 and
let $D_{(H,\psi)}$ be effective divisors such that $D:=\sum_{(H,\psi)} D_{(H,\psi)}$ is reduced. Then:
 \begin{enumerate}
\item  $\{L_{\chi}$, $D_{(H,\psi)}\}$ are the building data  of an 
abelian cover $f\colon X\to Y$ with $X$ normal if and only if they satisfy the fundamental relations \eqref{eq: fund-rel};

\item the building data determine $f\colon X\to Y$ up to  $G-$equivariant isomorphism.
\end{enumerate}
\end{thm}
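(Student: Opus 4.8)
The plan is to establish an equivalence between $G$-covers $f\colon X\to Y$ with $X$ normal and sheaves of commutative $\OO_Y$-algebras equipped with a $G$-action whose character decomposition is \eqref{dec}, and to read the building data and the fundamental relations \eqref{eq: fund-rel} off the algebra structure. Part (i) then splits into two implications: from a cover I would extract building data satisfying \eqref{eq: fund-rel}, and from data satisfying \eqref{eq: fund-rel} I would reconstruct a cover. Part (ii) is formal once the dictionary is in place, since the reconstruction is inverse to \eqref{dec}.

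For the implication ``cover $\Rightarrow$ relations'', I start from a $G$-cover with $Y$ smooth and $X$ normal. By miracle flatness ($Y$ is regular, $X$ is Cohen--Macaulay with finite fibres) the morphism $f$ is finite and flat, so $f_*\OO_X$ is locally free of rank $|G|$; as $G$ is abelian its $\OO_Y$-linear action splits it into character eigensheaves, which are rank-one summands of a locally free sheaf, hence the line bundles $L_\chi\inv$ of \eqref{dec}. The algebra multiplication restricts to maps $m_{\chi,\chi'}\colon L_\chi\inv\tensor L_{\chi'}\inv\to L_{\chi\chi'}\inv$, equivalently global sections of $L_\chi\tensor L_{\chi'}\tensor L_{\chi\chi'}\inv$. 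Identifying the divisor of $m_{\chi,\chi'}$ is a local question at the generic point of each branch component: there the cover is totally ramified with cyclic inertia $H$, so it is a standard cyclic cover, and a direct computation in the local model shows that $m_{\chi,\chi'}$ vanishes to order exactly $\epsi_{\chi,\chi'}^{(H,\psi)}$. Summing these contributions gives \eqref{eq: fund-rel}.

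For the converse I would set $\mathcal A:=\OO_Y\oplus\bigoplus_{\chi\ne 1}L_\chi\inv$ and define a multiplication by letting $L_\chi\inv\tensor L_{\chi'}\inv\to L_{\chi\chi'}\inv$ be tensoring with the canonical section of $\OO_Y(\sum_{(H,\psi)}\epsi_{\chi,\chi'}^{(H,\psi)}D_{(H,\psi)})$; relation \eqref{eq: fund-rel} is exactly the identification that makes this a well-defined map of line bundles. The real content is that the multiplication is associative, which reduces to the combinatorial identity
\[
\epsi_{\chi,\chi'}^{(H,\psi)}+\epsi_{\chi\chi',\chi''}^{(H,\psi)}=\epsi_{\chi',\chi''}^{(H,\psi)}+\epsi_{\chi,\chi'\chi''}^{(H,\psi)}
\]
holding for each $(H,\psi)$. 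This in turn is immediate from the definition: writing $r_\chi^{(H,\psi)}+r_{\chi'}^{(H,\psi)}=r_{\chi\chi'}^{(H,\psi)}+m_H\,\epsi_{\chi,\chi'}^{(H,\psi)}$, both sides of the identity compute the total carry when $r_\chi^{(H,\psi)}+r_{\chi'}^{(H,\psi)}+r_{\chi''}^{(H,\psi)}$ is reduced modulo $m_H$. Commutativity is similar and easier. Setting $X:=\Spec_Y\mathcal A$ then produces a finite flat $G$-cover; it is connected because no $L_\chi$ is trivial, and it is normal by Serre's criterion, with $S_2$ supplied by flatness over the smooth base and $R_1$ by the reducedness of $D$ via the same local cyclic model.

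Finally, uniqueness (ii) follows because a $G$-equivariant isomorphism of covers is the same as an isomorphism of the $G$-algebras $f_*\OO_X$, and this algebra has just been recovered from the building data by the construction above. The step I expect to be the main obstacle is the local analysis along the branch divisor --- pinning down the exact vanishing order $\epsi_{\chi,\chi'}^{(H,\psi)}$ in the forward direction and verifying $R_1$ in the converse --- but in both cases the presence of the inertia subgroup reduces the problem to the totally ramified cyclic cover, where everything can be written out explicitly.
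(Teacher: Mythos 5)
The paper you were given never proves this theorem: it is quoted verbatim from \cite{ritaabel} (Pardini, \emph{Abelian covers of algebraic varieties}, Thm.~2.1), so your attempt can only be judged against that original argument. Its architecture --- the dictionary between $G$-covers and $G^*$-graded $\OO_Y$-algebras, the local cyclic model at the generic points of the branch components, the ``carry'' identity for the coefficients $\epsi_{\chi,\chi'}^{(H,\psi)}$, and Serre's criterion for normality of $\Spec_Y\mathcal A$ --- is indeed the right one and matches the original; the forward direction (cover $\Rightarrow$ relations) and the connectedness and $R_1$ arguments are essentially sound.

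There is, however, a genuine gap, and it sits at the center of both the converse implication in (i) and of (ii): you conflate linear equivalence with a chosen isomorphism. The relations \eqref{eq: fund-rel} are equalities in $\Pic(Y)$ only; they do not single out an isomorphism $L_{\chi}\inv\tensor L_{\chi'}\inv\cong L_{\chi\chi'}\inv\bigl(-\sum_{(H,\psi)}\epsi_{\chi,\chi'}^{(H,\psi)}D_{(H,\psi)}\bigr)$, and any two such isomorphisms differ by a nonzero constant. So ``tensoring with the canonical section'' defines the multiplication only after a choice for each pair $(\chi,\chi')$, and your carry identity then shows only that the two triple-product maps have the same divisor of zeros, i.e.\ that they agree up to a $\C^*$-valued $3$-cocycle on $G^*$ --- not that the algebra is associative. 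Symmetrically, in (ii) two covers with the same building data give algebras whose multiplications differ by a symmetric $\C^*$-valued $2$-cocycle, and you must show this can be normalized away by rescaling the eigensheaf identifications; asserting that the algebra ``has just been recovered from the building data'' assumes exactly this. Both points are repairable, and this is precisely why the reduced structure theorem (Theorem \ref{prop: redstructure}) exists: fix generators $\chi_1,\dots,\chi_k$ of $G^*$, fix once and for all isomorphisms realizing the $k$ reduced relations \eqref{eq: redfund}, write every $L_\chi$ in the resulting monomial normal form, and define all multiplication maps as multiplication by explicit monomials in the canonical sections of the $D_{(H,\psi)}$; associativity then holds literally. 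For uniqueness, use that $\C^*$ is divisible, hence $\Ext^1_{\Z}(G^*,\C^*)=0$, so every symmetric $2$-cocycle is a coboundary. A second, smaller slip: miracle flatness requires $X$ Cohen--Macaulay, and normality implies this only in dimension $\le 2$, while the theorem concerns varieties of arbitrary dimension; the correct argument is that each eigensheaf of $f_*\OO_X$ is rank-one, torsion-free and $S_2$, hence reflexive, hence invertible because $Y$ is smooth (locally factorial), so $f_*\OO_X$ is locally free and $f$ is flat.
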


Theorem \ref{thm: structure} is satisfactory from the  theoretical point of view, however it is not so  easy to
apply, since the number of equations in \ref{eq: fund-rel} grows quadratically with the order of $G$.
In fact, it is possible to reduce the number of equations to the rank of the group $G$; in particular, cyclic covers are described by one relation. The price that
one has to pay  for this is that the formulation of the theorem is not  intrinsic any more. 
So, choose  characters  $\chi_1,\dots \chi_k\in G^*$  such that $G^*=<\chi_1>\oplus\dots\oplus 
<\chi_k>$. Denote by   $d_i$ the order of $\chi_i$, write  $L_i:=L_{\chi_i}$, $i=1, \dots k$ and
$r^{(H,\psi)}_i:=r^{(H,\psi)}_{\chi_i}$. We call  $L_i$, $D_{(H,\psi)}$  the {\em reduced building
data} of the cover and we have the following {\em reduced fundamental relations}:
\begin{equation}\label{eq: redfund}
 d_iL_i\equiv \sum _{(H,\psi)}\frac{d_i r_i^{(H,\psi)}}{m_H}D_{(H,\psi)},\quad i=1,\dots k
\end{equation}
We have the following ``reduced version'' of Theorem \ref{thm: structure}:
\begin{thm}[\cite{ritaabel}, Prop. 2.1]\label{prop: redstructure} (Notation as above).\newline
Let $Y$ be a smooth projective variety, let  $\{L_i\}_{i=1,\dots k}$ be line bundles of $Y$ and
let $D_{(H,\psi)}$ be effective divisors such that $D:=\sum D_{(H,\psi)}$ is reduced. 

Then:
 \begin{enumerate}
\item  $\{L_i, D_{(H,\psi)} \}$  can be extended to a set of building data $\{L_\chi, D_{(H,\psi)}\}$  satisfying
\eqref{eq: fund-rel} if and only if  $\{L_i, D_{(H,\psi)} \}$ satisfy \eqref{eq: redfund};
\item  $\{L_i ,D_{(H,\psi)}\}$ uniquely determine the $L_{\chi}$.  
\end{enumerate}
\end{thm}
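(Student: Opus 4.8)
The plan is to reduce everything to one arithmetic fact about the exponents $r_\chi^{(H,\psi)}$. Since restriction to $H$ is multiplicative, $\chi|_H\cdot\chi'|_H=(\chi\chi')|_H$ gives $r_\chi^{(H,\psi)}+r_{\chi'}^{(H,\psi)}\equiv r_{\chi\chi'}^{(H,\psi)}\pmod{m_H}$; comparing with the definition of $\epsi$ upgrades this to the exact \emph{carry identity}
\[
r_\chi^{(H,\psi)}+r_{\chi'}^{(H,\psi)}=r_{\chi\chi'}^{(H,\psi)}+m_H\,\epsi_{\chi,\chi'}^{(H,\psi)},
\]
which says that $\epsi$ is precisely the carry produced when adding $r_\chi$ and $r_{\chi'}$ modulo $m_H$. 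I would isolate this identity first, as it powers both directions.

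For the ``only if'' part of (i), I would assume a full extension $\{L_\chi\}$ satisfying \eqref{eq: fund-rel} and recover the $i$-th relation \eqref{eq: redfund} by writing \eqref{eq: fund-rel} for the pairs $(\chi_i^j,\chi_i)$ with $j=1,\dots,d_i-1$ and summing over $j$. The left-hand sides telescope to $d_iL_i-L_{\chi_i^{d_i}}=d_iL_i$, using $\chi_i^{d_i}=1$ and $L_1\equiv\OO_Y$. The coefficient of each $D_{(H,\psi)}$ on the right is $\sum_{j=1}^{d_i-1}\epsi_{\chi_i^j,\chi_i}^{(H,\psi)}$, and summing the carry identity around the full cycle $j=0,\dots,d_i$ (whose net change in the $r$'s is zero, and whose $j=0$ carry vanishes since $r_i<m_H$) evaluates this sum to exactly $d_ir_i^{(H,\psi)}/m_H$. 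This is \eqref{eq: redfund}.

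For the ``if'' part of (i) and for (ii) I would write down the $L_\chi$ explicitly. Expanding $\chi=\chi_1^{a_1}\cdots\chi_k^{a_k}$ with $0\le a_i<d_i$, set
\[
L_\chi:=\sum_i a_iL_i-\sum_{(H,\psi)}b_\chi^{(H,\psi)}D_{(H,\psi)},\qquad b_\chi^{(H,\psi)}:=\frac{1}{m_H}\Big(\sum_i a_ir_i^{(H,\psi)}-r_\chi^{(H,\psi)}\Big),
\]
noting that $b_\chi^{(H,\psi)}$ is a nonnegative integer because $r_\chi\equiv\sum_i a_ir_i\pmod{m_H}$, and that the formula returns $L_i$ at $\chi=\chi_i$ and $\OO_Y$ at $\chi=1$. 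To check \eqref{eq: fund-rel} I would compute $L_\chi+L_{\chi'}-L_{\chi\chi'}$: the $L_i$-terms give $\sum_i d_ie_iL_i$, where $e_i\in\{0,1\}$ is the carry in the $i$-th exponent, which \eqref{eq: redfund} rewrites as a combination of the $D_{(H,\psi)}$; the $b$-terms contribute $-\sum_{(H,\psi)}(b_\chi+b_{\chi'}-b_{\chi\chi'})D_{(H,\psi)}$, and the carry identity makes the two contributions cancel down to exactly $\sum_{(H,\psi)}\epsi_{\chi,\chi'}^{(H,\psi)}D_{(H,\psi)}$. Finally, for uniqueness (ii) I would take two extensions $\{L_\chi\},\{L'_\chi\}$ of the same reduced data and set $M_\chi:=L_\chi-L'_\chi$; subtracting the two copies of \eqref{eq: fund-rel} cancels the $D$-terms and yields $M_{\chi\chi'}\equiv M_\chi+M_{\chi'}$, so $\chi\mapsto M_\chi$ is a homomorphism $G^*\to\Pic(Y)$ vanishing on the generators $\chi_1,\dots,\chi_k$, hence identically zero.

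The main obstacle is purely the bookkeeping in the third paragraph: verifying that this single closed formula satisfies all (quadratically many) fundamental relations simultaneously. Once the carry identity is in hand the verification is mechanical, and the only genuinely delicate points are the integrality and nonnegativity of the $b_\chi^{(H,\psi)}$ and keeping the sums over the pairs $(H,\psi)$ correctly aligned with the corresponding $\frac{d_ir_i^{(H,\psi)}}{m_H}$ in \eqref{eq: redfund}.
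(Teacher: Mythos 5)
Your proposal is correct, and it is essentially the argument behind the cited result: the paper itself does not reprove Theorem \ref{prop: redstructure} (it quotes \cite{ritaabel}, Prop.~2.1), but your closed formula for $L_\chi$, with $b_\chi^{(H,\psi)}=\bigl(\sum_i a_i r_i^{(H,\psi)}-r_\chi^{(H,\psi)}\bigr)/m_H$, coincides with the expression $\beta_{(H,\psi)}=\bigl\lfloor\sum_i \alpha_i r_i^{(H,\psi)}/m_H\bigr\rfloor$ recorded in Remark \ref{rem: reduced}, because $r_\chi^{(H,\psi)}$ is precisely the remainder of $\sum_i \alpha_i r_i^{(H,\psi)}$ modulo $m_H$. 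The carry identity, the telescoping argument for the ``only if'' direction (with the convention $L_1\equiv\OO_Y$, $r_1^{(H,\psi)}=0$), and the homomorphism argument $\chi\mapsto M_\chi$ for uniqueness constitute exactly the standard verification, so there is nothing to correct.
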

\begin{rem}\label{rem: reduced}  Let  $f\colon X\to Y$ be a $G$-cover with reduced building data $\{L_i, D_{(H,\psi)} \}$. Given a character $\chi\in G^*$, it is possible to  compute explicitly $L_{\chi}$ from the reduced building data as follows. Write $\chi=\chi_1^{\alpha_1}\cdots \chi_k^{\alpha_k}$ with $0\le \alpha_k<d_i$, $i=1,\dots k$ and for every pair $(H, \psi)$ set $\beta_{(H,\psi)}:=\big \lfloor\frac{\sum \alpha_i r_i^{(H,\psi)}}{m_H}\big\rfloor$; then:
\begin{equation}
L_\chi\equiv \sum_{i=1}^k \alpha_iL_i-\sum_{(H,\psi)} \beta_{(H,\psi)} D_{(H,\psi)}.  
\end{equation}
If $\chi$ has order $d$ and $\Pic(Y)$ has no $d$-torsion (e.g., $Y$ is simply connected), then $L_\chi$ can also be computed as the only solution in $\Pic(Y)$ of the equation  
\begin{equation}\label{eq: d-power}
dL_{\chi}\equiv \sum_{(H,\psi)}\frac {dr_{\chi}^{(H,\psi)}}{m_H} D_{(H,\psi)}.
\end{equation}
\end{rem}

\begin{rem}[Cyclic covers]\label{rem: Zd-covers} 
 Let $G=\Z_d$,  let $\chi\in G^*$ be a generator and let $L=L_{\chi}$. By Theorem \ref{prop: redstructure} in this case the reduced building data have to  satisfy just one relation. 
Choose a primitive $d$-th root $\zeta$ of $1$ and let $g\in G$ be the generator such that $\chi(g)=\zeta$. Given a pair $(H,\psi)$,   there is a unique integer  $0 <\al<d$ such that $g^{\al}$ generates $H$ and $\psi(g^{\al})=\zeta^{d/m_H}$. Conversely, $\al$ determines the pair $(H,\psi)$ completely.  
Hence we can decompose $D=\sum_ {\al}D_{\al}$.  

Here there is only one  reduced fundamental relation:
\begin{equation}
dL\equiv\sum_{\al}\al D_{\al} 
\end{equation}
If $D=D_{\al}$ for some $\al$ such that $(\al, d)=1$, then it is possible to choose $\chi$ in such a way that the reduced fundamental relation is: $$dL\equiv D.$$
This is called a  {\em simple cyclic cover}. Simple cyclic covers are especially easy to describe: they can be realized as hypersurfaces in the total space $V(L)$ of the line bundle $L$ and the adjunction formula \eqref{eq: adjunction} below simplifies to:
\begin{equation}\label{eq: simple-adj}
K_X\equiv f^*(K_Y+(d-1)L)
\end{equation}
\end{rem}

\begin{rem}[$\Z_p^k$-covers]\label{rem: Zp-covers}
Take $G=\Z_p^k$, with $p$ a prime number. If we fix a $p$-th root $\zeta\ne 1$ of 1 we can take the ``logarithm'' of any  character $\chi$ of $G$ and regard $G^*$ as a the dual vector space of $G$. In addition, there is a bijection  of $G\setminus \{0\}$ with the set of pairs (character, subgroup) that associates  to $v\in G\setminus \{0\}$ the subgroup $H$ generated by $v$ and the character $\psi\in H^*$ such that $\psi(v)=1$ (recall that we are viewing characters as  $\Z_p$-linear functionals).  If we denote by $\overline m$ the smallest non-negative representative of a class $m\in \Z_p$, the fundamental relations \eqref{eq: fund-rel} read 
\begin{equation}\label{eq: fund-rel-Zp}
L_{\chi}+L_{\chi'}\equiv L_{\chi+\chi'}+\sum_{v\ne 1}\lfloor\frac{\overline{\chi(v)}+\overline{\chi'(v)}}{p}\rfloor D_v, \quad \forall
\chi,\chi'\in G^*,
\end{equation}
and the reduced fundamental relations \eqref{eq: redfund} read: 
\begin{equation}\label{eq: redfund-Zp}
 pL_i\equiv \sum _{v\ne 1}\overline{\chi_i(v)}D_v,\quad i=1,\dots k.
 \end{equation}
 Similarly, \eqref{eq: d-power} becomes 
\begin{equation}\label{eq: p-power}
pL_{\chi}\equiv \sum _{v\ne 1}\overline{\chi(v)}D_v.
\end{equation}
\end{rem}
\bigskip

 As one would expect in view of Theorem \ref{thm: structure}, geometrical properties of a $G$-cover can be read off the building data. For instance the cover is {\em totally ramified}, namely it does not factor through a non trivial \'etale cover of $Y$, iff  the inertia subgroups of the components of $D$ generate $G$.  Here is the criterion for smoothness:
 \begin{prop}[\cite{ritaabel}, Prop. 3.1]\label{prop: smoothcover} 
Let $f\colon X\to Y$ be an abelian cover  with branch divisor $D=\sum_{(H,\psi)} D_{(H,\psi)}$,   let $y\in Y$ be a point,  let $D_1,\dots
D_s$ be the irreducible components of $D$ that contain $y$ and let $(H_i,\psi_i)$ be the pair
subgroup-character associated to $D_i$, $i=1,\dots s$. Then $X$ is smooth above $y$ if and only if:
\begin{enumerate}
\item[(1)] $D_i$ is smooth at $y$ for every $i$;

\item[(2)]  $D$ is  normal crossings  at $y$;

\item[(3)] the natural map $H_1\oplus\dots\oplus  H_s\to G$ is injective.  
\end{enumerate}
\end{prop}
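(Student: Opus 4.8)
The plan is to prove the smoothness criterion of Proposition \ref{prop: smoothcover} by reducing the question to a purely local one. Since smoothness is a local analytic property, I would fix the point $y\in Y$ and work in a formal (or analytic) neighborhood. The key structural fact I would invoke is that an abelian cover is, locally near $y$, determined by the components $D_1,\dots,D_s$ of $D$ passing through $y$ together with their inertia pairs $(H_i,\psi_i)$: the components of $D$ avoiding $y$ contribute nothing to the local structure of $X$ over $y$. Thus I may replace $G$ by the subgroup $G'=H_1+\dots+H_s$ it generates, because the cover $X\to Y$ factors as $X\to X'\to Y$ where $X'\to Y$ is the $G'$-cover carrying all the local branching and $X\to X'$ is \'etale over $y$ (the remaining inertia groups are trivial there). Étale maps preserve smoothness, so $X$ is smooth above $y$ iff $X'$ is.

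Having reduced to $G'$, the next step is to analyze the local equations. Conditions (1) and (2) say precisely that, in suitable local coordinates at $y$, the divisors $D_1,\dots,D_s$ are given by distinct coordinate hyperplanes, say $D_i=\{x_i=0\}$ with the $x_i$ part of a regular system of parameters. I would then write down the standard local model of the cyclic cover attached to each pair: over the chart, the cover $X'$ is built by adjoining, for each $i$, a root $w_i$ with $w_i^{m_{H_i}}=x_i$ (up to units, using the reduced fundamental relations \eqref{eq: redfund} which I may assume earlier in the excerpt). The normalization of the fiber product of these cyclic covers is exactly $X'$ locally, and this is where condition (3) enters decisively.

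The main obstacle, and the heart of the proof, is condition (3): the injectivity of $H_1\oplus\dots\oplus H_s\to G$. I would argue that the local ring of $X'$ above $y$ is the subring of the total root extension $\C[[x_1,\dots]][w_1,\dots,w_s]/(w_i^{m_{H_i}}-x_i)$ fixed by the relations among the generators $g^{\al_i}$ of the $H_i$ inside $G$. When the map on the $H_i$ is injective, there are no relations beyond the obvious ones, the monodromy around the $D_i$ is ``independent,'' and the normalization is simply the smooth space with coordinates $w_1,\dots,w_s$ and the remaining unbranched parameters — hence $X'$ is smooth. Conversely, if the map fails to be injective there is a nontrivial relation $\prod g^{\al_i}=1$ in $G$ while some $\al_i\ne 0$, forcing an identification that introduces a quotient singularity (a toric/cyclic quotient of the smooth model), so $X'$ is singular above $y$. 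I would make the converse precise by exhibiting the relation as an element of the kernel and tracing through the decomposition \eqref{dec} to see that the corresponding eigenspace produces a non-principal, non-smooth local structure.

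To keep the argument clean, I would organize it as: (a) localize and reduce to $G'$ via the \'etale factorization; (b) choose local coordinates realizing (1) and (2), reducing to the fiber-product-of-cyclic-covers model; (c) identify the normalization of this model with an explicit toric variety whose smoothness is governed exactly by the map $H_1\oplus\dots\oplus H_s\to G$ being injective. Step (c) is where the real content lies, since one must verify that the eigenspace decomposition \eqref{dec} of $f_*\OO_X$ matches the character-theoretic data in the fundamental relations \eqref{eq: fund-rel}, and that no subtler singularity hides in the normalization when (3) holds. The forward and backward implications are then both read off this model.
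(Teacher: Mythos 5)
This proposition is quoted in the paper from \cite{ritaabel} without proof, so your sketch has to stand on its own; its overall plan (localize at $y$, replace $G$ by the subgroup $G'$ generated by the inertia groups of the branches through $y$, read smoothness off an explicit local model) is indeed the standard route, but as organized it proves strictly less than the statement. Your local analysis \emph{starts} by assuming (1) and (2) in order to choose coordinates with $D_i=\{x_i=0\}$, so at best you obtain: \emph{if} (1) and (2) hold, then $X$ is smooth above $y$ if and only if (3) holds. The implication ``$X$ smooth above $y$ $\Rightarrow$ (1) and (2)'' is nowhere addressed, and it cannot be ``read off'' a model that only exists under (1) and (2): when the branch divisor has, say, a cusp or three concurrent components at $y$, you have no model at all, yet you must still show the cover is singular there. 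The standard way to get the full converse is to pick $x$ over $y$, linearize the stabilizer action on the germ $(X,x)$ (Cartan's lemma), so that $(Y,y)\cong(\C^n,0)/G_x$ with $G_x$ acting diagonally, and invoke Chevalley--Shephard--Todd: smoothness of $Y$ forces $G_x$ to be generated by quasi-reflections, hence $G_x=Q_1\oplus\dots\oplus Q_s$ with each $Q_i$ scaling a single coordinate; conditions (1), (2) and (3) are then read off this normal form simultaneously. Nothing in your outline plays this role.

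The second gap is in your converse for (3) itself. With (1), (2) granted and $K:=\ker(H_1\oplus\dots\oplus H_s\to G)\ne 1$, the local cover is the quotient of the smooth model $\{w_i^{m_{H_i}}=x_i\}$ by $K$ acting by roots of unity on the $w_i$ --- note it is \emph{not} ``the normalization of the fiber product'', as you write: the fiber product is already smooth, and the actual cover is a proper quotient of it, of degree $|G'|<\prod_i m_{H_i}$. Your claim that this quotient ``introduces a quotient singularity, so $X'$ is singular'' is precisely what needs proof, since finite quotients of smooth germs are very often smooth (any quotient by a group generated by quasi-reflections, e.g. $w_1\mapsto -w_1$). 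The missing observation is that $K\cap H_i=\ker(H_i\to G)=1$ for every $i$, because each $H_i$ is a subgroup of $G$; hence no nontrivial element of $K$ fixes a hyperplane, i.e. $K$ contains no quasi-reflections, and a nontrivial finite group acting on a smooth germ without quasi-reflections has singular quotient (Prill's theorem, the germ form of Chevalley--Shephard--Todd). Without this point your backward implication simply does not follow. A smaller slip: your factorization $X\to X'\to Y$ with $X'\to Y$ the $G'$-cover and $X\to X'$ \'etale presupposes a splitting $G\cong G'\oplus (G/G')$, which need not exist (e.g. $\Z_2\subset\Z_4$); the correct reduction is $X\to X/G'\to Y$, where $X/G'\to Y$ is \'etale over a neighbourhood of $y$ by purity of the branch locus, so that over such a neighbourhood $X$ is a disjoint union of connected $G'$-covers.
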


One can  compute the numerical invariants of $X$ in terms of the corresponding invariants of $Y$
and of the building data. Let  $d$ denote the exponent of $G$, namely the least common multiple of the orders of the elements of $G$.  Then $dK_X$ is Cartier and one has:
\begin{equation}\label{eq: adjunction}
dK_X\equiv f^*(dK_Y+\sum \frac{d(m_H-1)}{m_H}D_{(H,\psi)})
\end{equation}
As a consequence, it makes sense to compute:
\begin{equation}\label{eq: K2}
 K_X^n=|G|\left(K_Y+\sum_{(H,\psi)} \frac{(m_H-1)}{m_H}D_{(H,\psi)}\right)^n.
\end{equation}
In addition one has:
\begin{gather}\label{eq: hi}
h^i(\OO_X)=h^i(\OO_Y)+\sum_{\chi\ne 1}h^i(L_{\chi}\inv),\\ \chi(\OO_X)=\chi(\OO_Y)+\sum_{\chi\ne 1}\chi(L_{\chi}\inv)\nonumber
\end{gather}
The group $G$ acts also on the canonical sheaf $\omega_X=\OO_X(K_X)$ and  the   sheaf $f_*\omega_X$ decomposes under the action of $G$ as follows:

\begin{equation}\label{eq: omega}
f_*\omega_X= \omega_Y\oplus \ \bigoplus_{\chi\ne 1}\omega_Y\otimes L_{\chi}
\end{equation} 
where $G$ acts trivially on  $ \omega_Y$ and acts  on  $\omega_Y\otimes L_{\chi}$ via the character $\chi\inv$. 
\begin{rem}\label{rem: omega}
Equation \eqref{eq: omega} gives  the following $G$-equivariant decomposition:
\begin{equation}\label{pg}
H^0(X,K_X)=H^0(Y,K_Y)\oplus\ \bigoplus_{\chi\ne 1}H^0(Y,K_Y+L_{\chi\inv}),
\end{equation}
where $G$ acts on $H^0(Y,K_Y+L_{\chi\inv})$ via the character $\chi$. So if we have a  $G$-cover of smooth varieties $f\colon X\to Y$  such that $h^0(K_Y+L_{\chi})\ne 0$ for precisely one character $\chi$, then the canonical system $|K_X|$ is pulled back from $Y$ and the canonical map of $X$ factorizes though $f$. If in addition the system $|K_Y+L_{\chi}|$ is birational, then $f$ is birationally equivalent to the canonical map of $X$. This observation can be used to construct examples of case (A) of Theorem \ref{thm: be-pg} (see \S \ref{ssec: ab-ex}).
\smallskip

Assume now instead that  $\Gamma$ is a subgroup of $G$ such that $h^0(K_Y+L_{\chi})=0$ for every $\chi\in\Gamma^{\perp}$. Then the canonical map of $X$ factorizes via the quotient map $X\to Z:=X/\Ga$. The surface $Z$ thus constructed is  in general singular, but it has  rational singularities and $p_g(Z)=p_g(X)$. So the canonical map of $X$ is the composition of $X\to Z$ with the canonical map of (a smooth model of) $Z$; in particular,   if the canonical map of $Z$ is birational then $X\to Z$ is essentially the canonical map of $X$. This observation can be used to construct examples of case (B) of Theorem \ref{thm: be-pg} (see  \S \ref{ssec: ab-ex}).
The difficulty in applying this method is that one  needs $L_{\chi}$ to be ``very small'' for $\chi\notin \Ga^{\perp}$, in order to satisfy condition \ref{pg}, and at the same time the building data of the cover must be sufficiently positive for $X$ to be a surface of general type. 
\end{rem}

We close this section with a result that is useful in determining  the  base locus of the canonical system of a $G$-cover:
\begin{prop} \label{prop: base-locus}
Let $f\colon X\to Y$ be a $G$-cover of smooth varieties with building data $\{L_\chi, D_{(H,\psi)}\}$ and write $f^*D_{(H,\psi)}=m_HR_{(H,\psi)}$.
Then the zero locus of the map $f^*(\omega_Y\otimes L_{\chi})\to \omega_X$ induced by the decomposition \eqref{eq: omega} is equal to $$\sum_{(H,\psi)}(m_H-1 -r_{\chi}^{(H,\psi)})R_{(H,\psi)}.$$
\end{prop}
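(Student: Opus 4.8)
The plan is to reduce the statement to a local multiplicity computation along each branch component. Since $X$ is smooth, both $f^*(\omega_Y\otimes L_\chi)$ and $\omega_X$ are line bundles, so the map between them is determined by its zero divisor, an effective divisor of pure codimension one. Off the ramification locus $f$ is \'etale and the map is an isomorphism; hence the zero divisor is supported on $\bigcup_{(H,\psi)}R_{(H,\psi)}$, and it suffices to compute, for each pair $(H,\psi)$, its multiplicity along $R_{(H,\psi)}$ at the generic point of $D_{(H,\psi)}$.

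First I would fix the local model at a general point of $D:=D_{(H,\psi)}$, where no other branch component passes. Writing $m:=m_H$, the cover is there analytically the totally ramified cyclic cover $s^m=t$, with $t$ a local coordinate on $Y$ cutting out $D$, $s$ a coordinate on $X$, and $R:=R_{(H,\psi)}=\{s=0\}$, so that indeed $f^*D=mR$. In these coordinates $\OO_X$ is free over $\OO_Y$ with basis $1,s,\dots,s^{m-1}$, and $f_*\omega_X$ is free over $\OO_Y$ with basis $s^i\,ds$ for $i=0,\dots,m-1$; each $s^i\,ds$ is a $G$-eigenvector, with $g^*(s^i\,ds)=\zeta^{\,i+1}s^i\,ds$ for a primitive $m$-th root of unity $\zeta$ determined by the action.

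Next I would single out the local generator of the summand $\omega_Y\otimes L_\chi$ in the decomposition \eqref{eq: omega}. The clean anchor is $\chi=1$: then $L_1=\OO_Y$ and $r_1^{(H,\psi)}=0$, while the summand is the invariant part $(f_*\omega_X)^G$, generated by $s^{m-1}\,ds$ since $g^*(s^{m-1}\,ds)=\zeta^{\,m}s^{m-1}\,ds=s^{m-1}\,ds$; thus the generator index $i$ equals $m-1=m-1-r_1^{(H,\psi)}$. Matching the eigenvalue $\zeta^{\,i+1}$ with the character through which $G$ acts on $\omega_Y\otimes L_\chi$ (namely $\chi^{-1}$, by \eqref{eq: omega}) and with the defining relation $\chi|_H=\psi^{r_\chi^{(H,\psi)}}$, one finds in general that $\omega_Y\otimes L_\chi$ is locally generated by $s^i\,ds$ with $i=m-1-r_\chi^{(H,\psi)}$. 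The map of the proposition is obtained by applying $f^*$ to the inclusion $\omega_Y\otimes L_\chi\hookrightarrow f_*\omega_X$ and composing with the counit $f^*f_*\omega_X\to\omega_X$; concretely it carries the generator $1\otimes(s^i\,ds)$ to the section $s^i\,ds=s^i\cdot ds$ of $\omega_X=\OO_X\,ds$, so it is multiplication by $s^i$ and vanishes to order $i=m_H-1-r_\chi^{(H,\psi)}$ along $R$. Summing these contributions over all branch components, each computed at its own generic point, yields the stated zero locus $\sum_{(H,\psi)}(m_H-1-r_\chi^{(H,\psi)})R_{(H,\psi)}$.

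The step I expect to be delicate is precisely the identification of the index $i=m_H-1-r_\chi^{(H,\psi)}$, i.e. deciding exactly which eigenspace of $f_*\omega_X$ is $\omega_Y\otimes L_\chi$. This hinges on keeping mutually consistent the conventions for the $G$-action on $X$ and on $f_*\omega_X$, for the character $\psi$ read off the normal bundle of $R$, and for the residue $r_\chi^{(H,\psi)}$: an inadvertent sign reversal at any of these stages would replace $r_\chi^{(H,\psi)}$ by $-r_\chi^{(H,\psi)}$ and break the formula. Once the conventions are calibrated on the unambiguous case $\chi=1$, the rest is the elementary eigenvalue shift and the observation that the counit map is multiplication by $s^i$.
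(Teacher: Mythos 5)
Your proof is correct, and it takes a genuinely different route from the paper's. You argue locally: off the ramification locus the evaluation map is an isomorphism, so its zero divisor is a combination of the $R_{(H,\psi)}$, and the multiplicity along $R_{(H,\psi)}$ is computed at the generic point of $D_{(H,\psi)}$ in the cyclic model $s^{m_H}=t$, where $f_*\omega_X$ has the eigenbasis $s^i\,ds$, $0\le i\le m_H-1$, the summand $\omega_Y\otimes L_\chi$ is the eigensheaf generated by $s^{\,m_H-1-r_\chi^{(H,\psi)}}ds$, and the map is multiplication by $s^{\,m_H-1-r_\chi^{(H,\psi)}}$. (Strictly speaking only the inertia group $H$ acts on each local branch of the cover, and one reduces from $G$ to $H$ by Frobenius reciprocity and equivariance of the map, which also guarantees all components of $R_{(H,\psi)}$ carry the same multiplicity; this is routine.) The paper never leaves the global setting: it raises the map to its $d$-th tensor power, $d$ the exponent of $G$, uses \eqref{eq: adjunction} and \eqref{eq: d-power} to identify $\omega_X^{\otimes d}$ and $(\omega_Y\otimes L_\chi)^{\otimes d}$ with explicit twists of $\omega_Y^{\otimes d}$ by divisors supported on the branch locus, observes that the $d$-th power of the map is the pullback of the natural inclusion of one twist into the other, reads off its zero divisor, and divides by $d$. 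What the paper's detour through $d$-th powers buys is precisely the avoidance of the issue you flag as delicate: all objects become line bundles on $Y$ described by divisors, and no eigenvalue bookkeeping (hence no sign convention) ever enters. Your route is more self-contained and explains where the exponent $m_H-1-r_\chi^{(H,\psi)}$ comes from, but it genuinely rests on the convention check.

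On that check, one correction: calibrating on $\chi=1$ does \emph{not} pin down the sign, contrary to what you assert. Since $r_1^{(H,\psi)}=0$ and the trivial character is its own inverse, the case $\chi=1$ is compatible both with the correct identification (the pullback action on the local equation $s$ of $R_{(H,\psi)}$ has eigenvalue $\psi$, giving exponent $m_H-1-r_\chi^{(H,\psi)}$) and with the sign-reversed one (eigenvalue $\psi\inv$, which would give exponent $r_\chi^{(H,\psi)}-1$ whenever $r_\chi^{(H,\psi)}\ge 1$); both yield $m_H-1$ at the trivial character. What actually fixes the sign is compatibility with the fundamental relations \eqref{eq: fund-rel} as normalized in the paper: the multiplication $L_\chi\inv\otimes L_{\chi'}\inv\to L_{\chi\chi'}\inv$ inside $f_*\OO_X$ must vanish on $D_{(H,\psi)}$ exactly when $r_\chi^{(H,\psi)}+r_{\chi'}^{(H,\psi)}\ge m_H$, which forces the $\chi$-eigensheaf of $f_*\OO_X$ to be generated locally by $s^{\,r_\chi^{(H,\psi)}}$, together with the elementary verification that the pullback action on $s$ agrees with (and is not inverse to) the normal-bundle character $\psi$. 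With that substitution for your calibration step, the argument is complete and gives the stated formula.
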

\begin{proof} Denote by $d$ the exponent of $G$; by \eqref{eq: adjunction}, we have an isomorphism $$\omega_X^{\otimes d}\cong f^*(\omega_Y^{\otimes d}(\sum_{(H,\psi)}\frac{d}{m_H}(m_H-1)D_{(H,\psi)}),$$ 
(the $d$-canonical forms on $Y$  with poles on $\sum_{(H,\psi)}\frac{d}{m_H}(m_H-1)D_{(H,\psi)}$ pull back to regular $d$-canonical  forms on $X$).
In addition,  by \eqref{eq: d-power} we have an isomorphism
 $$(\omega_Y\otimes L_{\chi})^{\otimes d} \cong \omega_Y^{\otimes d}\left(\sum_{(H,\psi)}\frac{d}{m_H}r_{\chi}^{(H,\psi)}D_{(H,\psi)}\right)$$
 The map $f^*(\omega_Y\otimes L_{\chi})^{\otimes d}\to \omega_X^{\otimes d}$ given  by taking the $d$-th tensor power of $f^*(\omega_Y\otimes L_{\chi})\to \omega_X$ is induced via $f^*$ by the natural inclusion  $$ \omega_Y^{\otimes d}(\sum_{(H,\psi)}\frac{d}{m_H}r_{\chi}^{(H,\psi)}D_{(H,\psi)})\longrightarrow \omega_Y^{\otimes d}(\sum_{(H,\psi)}\frac{d}{m_H}(m_H-1)D_{(H,\psi)})$$
 and so it vanishes along
$$f^*\left(\sum_{(H,\psi)}\frac{d}{m_H}(m_H-1-r_{\chi}^{(H,\psi)})D_{(H,\psi)}\right)= d\left(\sum_{(H,\psi)}(m_H-1-r_{\chi}^{(H,\psi)})R_{(H,\psi)}\right).$$ Since the above divisor is equal to $d$ times the zero divisor of $f^*(\omega_Y\otimes L_{\chi})$, the claim is proven.
\end{proof} 
\begin{rem} By  Proposition \ref{prop: base-locus}, $|K_X|$ is generated by  all the effective divisors  $ |f^*(\omega_Y\otimes L_{\chi})|+ \sum_{(H,\psi)}(m_H-1 -r_{\chi}^{(H,\psi)})R_{(H,\psi)}$ such that $h^0(\omega_Y\otimes L_{\chi})\neq 0$. So we obtain an effective way of understanding the base locus of the canonical map of $X$ (for an application see  Examples \ref{ex: pg} and \ref{ex6}). 
\end{rem}

\section{Examples}\label{sec: examples}
In this section we present a sampling of examples of the various possibilities for the degree and the canonical image of a surface of general type. We do not aim at completeness, but  rather  at giving  the reader a feeling for  how the methods of  \S \ref{ssec: Gpair} and \S \ref{ssec: AbCovers} work.  For this reason  we have chosen to list separately the examples obtained via  generating pairs and via abelian covers. 

\subsection{Examples arising from generating pairs}\label{ssec: Gpair-ex}
Here we list examples obtained as in \S \ref{ssec: Gpair}, using freely the notation introduced there. 
 We refer the reader to \cite[\S 3]{cirifra} for a detailed description of the various  generating pairs. 

\begin{ex}[Case (B), $d=2$ - Beauville's example] \label{ex: Gpair1}
We take $V$ an abelian surface with an irreducible principal polarization, $h\colon V\to W$ the quotient map to the Kummer surface and $L$ the line bundle on $W$ induced by twice the principal polarization. This gives a generating pair with $L^2=4$, $\nu=2$ and $C$  a non-hyperelliptic curve of genus 3. 

By  Proposition \ref{prop: Gpair}, the construction produces a sequence of minimal surfaces $X_n$ whose canonical map is $2$-to-$1$ onto a canonically embedded  minimal surface $\Sigma_n$. By Lemma \ref{lem: invariants-Gpair}   we have:
$$K^2_{\Sigma_n}=3p_g(\Sigma_n)-7;\quad K^2_{X_n} =6p_g(X_n)-14,\quad q(X_n)=2.$$
As seen in inequality  \eqref{acB2} of \S \ref{ssec: bounds}, a surface as in case (B) of Theorem \ref{thm: be-pg} satisfies $K^2_{X} \ge 6p_g(X)-14$, so this is a limit case. In \cite{nagoya} it is proven that these are essentially the only surfaces as in case (B) of Theorem \ref{thm: be-pg} with $K^2=6p_g-14$ and $q\ge 2$.

\end{ex}
\begin{ex}[Case (B), $d=2$] \label{ex: Gpair2}
We consider a generating pair constructed as follows. Take  $A$ an abelian  surface with an irreducible principal polarization,  let $M$ be a symmetric theta divisor and let $V\to A$  be the double cover given by the relation $2M\equiv B$, where $B\in |2M|$ is general.  The surface $V$ is smooth of general type with $K^2_V=4$, $p_g(V)=q(V)=2$;  using the fact that all curves in $|2M|$ are symmetric one shows the existence of an involution $
\iota$ of $V$ that lifts multiplication by $-1$ in $A$. The involution $\iota$ has 20 isolated fixed points and acts trivially on $|K_V|$. We let $h\colon V\to W:=V/\iota$ be the quotient map:  the surface $W$ has canonical singularities and  is minimal of general type with $K^2_W=2$, $p_g(W)=2$ and $q(W)=0$. So we have $h^0(2K_W)=h^0(2K_V)=5$ and setting $L:=K_W$ one obtains a generating pair. Using the fact that $W$ is by construction a double cover of the Kummer surface of $A$ one can show that the general $C\in |L|$ is non hyperelliptic. 

By  Proposition \ref{prop: Gpair}, the construction produces a sequence of minimal surfaces $X_n$ whose canonical map is $2$-to-$1$ onto a canonically embedded  minimal surface $\Sigma_n$. By Lemma \ref{lem: invariants-Gpair}   we have:
$$5K^2_{\Sigma_n}=16p_g(\Sigma_n)-32;\quad 5K^2_{X_n} =32p_g(X_n)-64,\quad q(X_n)=2.$$
\end{ex}

\begin{ex}[Case (B), $d=2$] \label{ex: Gpair3}
This example is similar to the previous one, in that the map $h\colon V\to W$ is the bicanonical map of $V$ and $L=K_W$. Here we consider a non-hyperelliptic curve $\Gamma$ of genus $3$ and take as   $V$ the symmetric product $S^2\Gamma$. The surface $V$ is smooth minimal of general ype with $K^2_V=6$ and $p_g(V)=q(V)=3$; the bicanonical map  $h\colon V\to W$ is the quotient map for the involution $\iota$ of $V$ that sends $p+q\in V$ to the only effective divisor linearly equivalent to $K_{\Gamma}-p-q$. The surface $W$ is minimal, has canonical singularities and invariants $K^2_W=3$,  $p_g(W)=3$, $q(W)=0$. 

By  Proposition \ref{prop: Gpair}, the construction produces a sequence of minimal surfaces $X_n$ whose canonical map is $2$-to-$1$ onto a canonically embedded  minimal surface $\Sigma_n$. By Lemma \ref{lem: invariants-Gpair}   we have:
$$7K^2_{\Sigma_n}=24p_g(\Sigma_n)-72;\quad 7K^2_{X_n} =48p_g(X_n)-144,\quad q(X_n)=3.$$

\end{ex}
\begin{ex}[Case (A), $d=6$]\label{ex: GpairA} For a detailed description   of this generating pair  see  \cite{BL}.
Let $B\subset \pp^2$ be a sextic curve obtained as the dual curve of a smooth plane cubic and let $W\to \pp^2$ be the double cover branched on $B$. The singularities of $B$ are 9 cusps, that give 9 singularities of type $A_2$ on $W$, so $W$ is a K3 surface with canonical singularities. We denote by $L$ the pull back of $\OO_{\pp^2}(1)$ on $W$, so we have $L^2=2$ and the general curve $C$ of $|L|$ is smooth of genus 2. There exists a $\Z_3$-cover $h\colon V\to W$ branched precisely over the singularities of $V$. The surface $V$ is abelian and $h^*L$ is a polarization of type $(1,3)$, so $h^0(K_V+h^*L)=h^0(h^*L)=3=h^0(L)=h^0(K_W+L)$ and we have a generating pair with $C$ hyperelliptic. 

By  Proposition \ref{prop: Gpair}, the construction produces a sequence of minimal surfaces $X_n$ whose canonical map is $6$-to-$1$ onto a rational surface  $\Sigma_n$. By Lemma \ref{lem: invariants-Gpair}   we have:
$$K^2_{\Sigma_n}=2p_g(\Sigma_n)-4;\quad K^2_{X_n} =6p_g(X_n)-12,\quad q(X_n)=2.$$

\end{ex}
\subsection{Examples constructed as abelian covers} \label{ssec: ab-ex}
While all the constructions of Section \ref{ssec: Gpair-ex} give sequences of examples with unbounded invariants, some of the constructions in this section give sporadic examples.

In all these examples we have $G=\Z_p^k$ with $p$ a prime and we use the additive notation for characters as explained in Remark \ref{rem: Zp-covers}; the  group  of characters is identified with $\Z_p^k$ via the dual basis of the canonical basis of $G=\Z_p^k$.

\begin{ex}[case (A), $p_g=3$, $d=3,\dots 9$]\label{ex: pg}
We take $G=\Z_2^2$  and  $Y$  a del Pezzo surface of degree $d\ge 3$. For every  $0\ne v\in G$ we choose a  curve  $D_v$  in $|-K_Y|$ in such a way that $D:= \sum_vD_v$ is a simple normal crossings divisor. The reduced  fundamental relations are 
$2L_{10}\equiv 2L_{01}\equiv  -2K_Y$, so that $L_{10}\equiv L_{01}\equiv -K_Y$ is the only solution. By Remark \ref{rem: reduced} we get $L_{11}\equiv -K_Y$, too.
The corresponding cover $f\colon X\to Y$ is smooth by Proposition \ref{prop: smoothcover} and using equations \eqref{eq: hi} and \eqref{eq: K2} we get $K^2_X=d$, $q(X)=0$ and $p_g(X)=3$. 
By \eqref{eq: adjunction} we have that $2K_X\equiv f^*(-K_Y)$ is ample and thus $X$ is minimal of general type. 
Finally, by  Proposition \ref{prop: base-locus} we see that the system $|K_X|$ is spanned by the three  curves $R_v:=f\inv D_v$, $0\ne v\in G$. So $|K_X|$ is free and maps $X$ $d$-to-1 to $\pp^2$.
\end{ex}
\begin{ex}[Case (A), $p_g=3$, $d=16$]\label{ex: 16}
This example is due to Persson, \cite{Pe}.    Let $Y=\pp^2$ and denote by $h$ the class of a line. Take $G=\Z_2^4$,  let $\chi_0\in G^*$ be the character  that maps $v\in G$ to the sum of its coordinates and consider the following building data:
\begin{itemize}
 \item[--]  $D_v$ is a line  if  $\chi_0(v)=1$, and  $D_v=0$  otherwise;
 \item[--] $L_{\chi_0}\equiv 4h$ and $L_{\chi}\equiv 2h$ if $\chi\ne \chi_0$.
\end{itemize}
We also assume that the lines $D_v$ are in general position. 
The surface $X$ is smooth by Proposition \ref{prop: smoothcover}; by \eqref{eq: adjunction} we have  $2K_X\equiv f^*(2h)$ and therefore $X$ is minimal of general type with $K^2_X=16$. Using \eqref{eq: hi} one computes $q(X)=0$ and $p_g(X)=3$.
One has $h^0(K_Y+L_{\chi})=0$ for every $\chi\ne \chi_0$, so the canonical map of $X$ coincides with $f$ (see  Remark \ref{rem: omega}). \end{ex}
\begin{ex}[case (A), $d=2$, unbounded $p_g$] \label{ex2A}
Any surface $Y$ with $p_g(Y)=0$ occurs as the canonical image of a minimal  surface $X$. This fact was noted for the first time in \cite{be}. 

 Choose a line bundle $L$ of $Y$ such that $|2L|$ is base point free and $K_Y+L$ is very ample and let $f\colon X \to Y$ be the double cover given by the relation $2L\equiv D$, where $D$ is a general element of $|2L|$. The surface $X$ is smooth by Proposition \ref{prop: smoothcover} and $K_X\equiv f^*(K_Y+L)$ is ample, so $X$ is minimal of general type. 
In addition we have $|K_X|=f^*|K_Y+L|$ (cf. Remark \ref{rem: omega})  and therefore the canonical map of $Y$ is just $f$ followed by the embedding defined by $|K_Y+L|$.
\end{ex}
\begin{ex}[Case (A), $d=8$, unbounded $p_g$]  \label{ex8 Be} 
This example is due to  Beauville  (\cite[ Exemple 4.3]{be}) and the construction is similar to the previous one. 
Take $Y=C\times \pp^1$, where $C$ is a non-hyperelliptic curve of genus 3. 
Let $\eta \in \Pic(C)$ be a non-zero 2-torsion element and set $L=\eta\boxtimes \OO_{\pp^1}(m)$, $m\ge 3$; pick a smooth divisor $D\in |2L|=|2mF|$, where $F$  is a fiber of the projection $Y\to \pp^1$. We denote by $f\colon X\to Y$ the double cover given by the relation $2L\equiv D$. As in the previous example, one checks that $X$ is a smooth minimal surface of general type and that the canonical map of $X$ is the composition of $f$ with the map given by the system $|K_Y+L|=|(K_C+\eta)\boxtimes \OO_{\pp^1}(m-2)|$. 
Since $C$ is not hyperelliptic, the system $|K_C+\eta|$ is free of degree 4. So $|(K_C+\eta)\boxtimes \OO_{\pp^1}(m-2)|$ gives a degree 4 map to $\pp^1\times \pp^1$, and the canonical map of $X$ has degree 8. 

The invariants of $X$ are: $K^2_X=16(m-2)$, $p_g(X)=2m-2$, $q(X)=3$.
\end{ex}

\begin{ex}[Case (A), $d=6$, unbounded $p_g$] \label{ex6} 
We take $G=\Z_3^2$ and $Y=\pp^1\times \pp^1$; we denote by $|F_1|$ and $|F_2|$  the two rulings of $\pp^1\times \pp^1$. 

We consider the $G$-cover $f\colon X\to Y$ given by the following choice of building data:
\begin{itemize}
 \item[--]  $D_{10},D_{20},D_{01},D_{02}$ are  distinct  fibers  of  $|F_1|$;
 \item[--] $D_{11}$ and $D_{22}$  are distinct fibres of  $|F_2|$; 
\item[--]   $D_{12}$ and $D_{21}$ are general elements of $|mF_2|$, $m\ge 3$; 
 \item[--] $L_{10}\equiv L_{01}\equiv L_{20}\equiv L_{02}\equiv F_1+(m+1)F_2$;
 \item[--] $L_{11}\equiv L_{22}\equiv 2F_1+F_2$;
 \item[--] $L_{12}\equiv L_{21}\equiv 2F_1+mF_2$.
\end{itemize}
The surface $X$ is smooth by Proposition \ref{prop: smoothcover}; by \eqref{eq: adjunction} we have  $3K_X\equiv f^*(2F_1+(4m-2)F_2)$ and therefore $X$ is minimal of general type with $K^2_X=16m-8$. Using \eqref{eq: hi} one computes $q(X)=0$ and $p_g(X)=2m-2$.

Denote by $\Gamma<G$ the cyclic subgroup generated by $(1,1)$; one has $h^0(K_Y+L_{\chi})=0$ for every $\chi\notin \Gamma^{\perp}$, so the canonical map of $X$ is the composition of the  quotient map $X\to Z:=X/\Gamma$ with the canonical map of a smooth model of $Z$ (see  Remark \ref{rem: omega}). 
The surface $Z$  is a   $\mathbb Z_3$-cover of  $Y$  with building data 
$D_1=D_{10}+D_{02}+D_{21}$ and $D_2=D_{20}+D_{01}+D_{12}$, $L_1=L_{12}$, $L_2=L_{21}$. The singularities of $Z$ are either of type  $A_2$, occurring over the singular points of $D_1$ and $D_2$, or of type  $\frac 13(1,1)$, occurring over the intersection points of $D_1$ and $D_2$; the minimal desingularization  $\tilde Z$ satisfies 
$K_{\tilde Z}^2= 4m-8$, $q(\tilde Z)=0$, $p_g(\tilde Z)=2m-2$. In addition, it is not hard to check that $K_{\tilde Z}$ is nef, hence  $\tilde Z$ is minimal with $K^2=2p_g-4$, namely it is a Horikawa surface. By  \cite[Lemma 1.1]{ho1}  the canonical map of $\tilde Z$ is $2$-to-1 onto a minimal ruled surface of $\pp^{2m-3}$ (note that the  fibration $|F_2|$ of $Y$ induces a genus $2$ fibration on $\tilde Z$).  So the canonical map of $X$ has degree 6. 

For $v\in G\setminus \{0\}$ we write as usual $f^*D_v=3R_v$. By   Proposition \ref{prop: base-locus} the canonical system $|K_X|$ is generated by the following linear subsystems:
$$f^*|K_Y+L_{12}|+ R_{10}+ R_{02}+R_{21}+2R_{11}+2R_{22}$$ and  $$f^*|K_Y+L_{21}|+R_{20}+ R_{01}+ R_{12}+ 2R_{11}+ 2R_{22}.$$ Since the systems $|K_Y+L_{12}|$ and $|K_Y+L_{21}|$ are base point free, it follows that   the fixed locus of $|K_X|$ consists of the divisor  $2R_{11}+2R_{22}$ and of $4m$ simple base points which are the inverse images of  the intersection points of  $D_1$ and $D_2$. 
\end{ex}

\begin{ex}[Case (B), $d=3$]  \label{ex3 tan} 
This example is due to Tan (\cite{tan}). We take as $Y$ the del Pezzo surface of degree 6, that is, the blow up of $\pp^2$ at three non collinear points; we denote by $|F_i|$, $i=1,2,3$ the three pencils of rational curves of $Y$ induced by the pencils of lines through the three blown up points. Recall that $K_Y\equiv-(F_1+F_2+F_3)$.  We take $G=\Z_3^2$ and consider  the $G$-cover $f\colon X\to Y$ given by the following choice of  building data:
\begin{itemize}
 \item[--]  $D_{10}\in|3F_1|$, $D_{01}\in|3F_2|$, $D_{22}\in|3F_3|$, and $D_v=0$ for the remaining $v\in G^*$;
 \item[--] $L_{10}\equiv F_1+2F_3$, $L_{01}\equiv F_2+2F_3$. 
 \end{itemize}
In addition we assume that $D=D_{10}+D_{01}+D_{22}$ is a simple normal crossings divisor. 
It is easy to check (cf. Remarks \ref{rem: reduced} and \ref{rem: Zp-covers}) that $L_{11}\equiv F_1+F_2+F_3$, $L_{22}\equiv 2(F_1+F_2+F_3)$ and for the remaining  $\chi$ one has $L_{\chi}\equiv F_i+2F_j$ for some $i\ne j\in\{1,2,3\}$.
The surface $X$ is smooth by Proposition \ref{prop: smoothcover}; by \eqref{eq: adjunction} we have  $3K_X\equiv f^*(3(F_1+F_2+F_3))$ and therefore $X$ is minimal of general type with $K^2_X=54$. Using \eqref{eq: hi} one computes $q(X)=0$ and $p_g(X)=8$.

Denote by $\Gamma<G$ the cyclic subgroup generated by $(1,2)$; one has $h^0(K_Y+L_{\chi})=0$ for every $\chi\notin \Gamma^{\perp}$, so the canonical map of $X$ is the composition of the  quotient map $X\to Z:=X/\Gamma$ with the canonical map of a smooth model of $Z$ (see  Remark \ref{rem: omega}). 
The surface $Z$ is a simple $\Z_3$-cover of $Y$ with branch locus  $D=D_{10}+D_{01}+D_{22}$ with canonical singularities (it has $A_2$ points over the singular points of $D$). We claim that the canonical map $\fie_Z$  of $Z$ is birational. Indeed, by \eqref{eq: simple-adj} the canonical bundle $K_Z$ is the pullback of $K_Y+2L_{11}\equiv F_1+F_2+F_3$, so the covering map $Z\to Y$ is composed with $\fie_Z$. Since $p_g(Z)=8>7=h^0(F_1+F_2+F_3)$, we conclude that $\fie_Z$ is not equal to the covering map and therefore it has degree 1.
\end{ex}

\begin{ex}[Case (B), $d=3$]  \label{ex3 rita} 
This example is taken from \cite{supcan}.
We take $G=\Z_3^3$ and $Y=\pp^2$; we denote by $\chi_1,\chi_2,\chi_3$ the canonical basis   of the space of characters and by
 $h$ the class of a line in $Y$.  We consider  the $G$-cover $f\colon X\to Y$ given by the following choice of reduced building data:
\begin{itemize}
 \item[--]  $D_v$ is a line if $\chi_1(v)=1, \chi_2(v)=0$ or  $\chi_1(v)=0, \chi_2(v)=1$, and $D_v=0$ otherwise;
 \item[--] $L_{100}\equiv L_{010}\equiv h$, $L_{001}\equiv 2h$;
\end{itemize}
In addition we assume that the lines $D_v$ are in general position. The surface $X$ is smooth by Proposition \ref{prop: smoothcover}; by \eqref{eq: adjunction} we have  $3K_X\equiv f^*(3h)$ and therefore $X$ is minimal of general type with $K^2_X=27$. 

Let $\Gamma$ be the subgroup of $G$ generated by $(0,0,1)$: it is not hard to check (cf. Remarks \ref{rem: reduced} and \ref{rem: Zp-covers}) that for all $\chi\notin \Gamma^{\perp}$ one has  one has $L_{\chi}\equiv 2h$ and therefore $h^0(K_Y+L_{\chi})=0$. Using \eqref{eq: hi} one computes $q(X)=0$ and $p_g(X)=5$.
By Remark \ref{rem: omega} the canonical map of $X$ is composed with the quotient map $X\to Z:=X/\Gamma$. The surface $Z$ is a $\Z_3^2$-cover with reduced building data:
\begin{itemize}
 \item[--]  $D_{10}=D_{100}+D_{101}+D_{102}$  and $D_{01}=D_{010}+D_{011}+D_{012}$;
 \item[--] $L_{10}\equiv L_{01}\equiv h$.
  \end{itemize}
 So $Z$ is the fibered product of two simple cyclic $\Z_3$-covers, each branched on the union of three lines;  it is easily seen to be isomorphic to the complete intersection of two cubics of $\pp^4$.  The singularities of $Z$ are $A_2$-points, occurring over the singular points of $D_{10}$ and $D_{01}$. 
So $Z$  is a canonically embedded surface and so the canonical map of $X$ has degree 3.

\end{ex}
\begin{ex}[Case (B), $d=5$]  \label{ex5} 
This example has been found  both in \cite{tan} and \cite{supcan}, independently. 

The construction is similar to the previous one. 
We take $G=\Z_5^2$ and $Y=\pp^2$; we denote by $\chi_1,\chi_2$ the canonical basis of the space of characters and by $h$ the class of a line in $Y$.  We consider  the $G$-cover $f\colon X\to Y$ given by the following choice of reduced building data:
\begin{itemize}
 \item[--]  $D_v$ is a line if $\chi_1(v)=1$ and $D_v=0$ otherwise;
 \item[--] $L_{10}\equiv h$, $L_{01}\equiv 2h$.
\end{itemize}
In addition we assume that the lines $D_v$ are in general position. The surface $X$ is smooth by Proposition \ref{prop: smoothcover}; by \eqref{eq: adjunction} we have  $5K_X \equiv f^*(5h)$ and therefore $X$ is minimal of general type with $K^2_X=25$. 

Let $\Gamma$ be the subgroup of $G$ generated by $(1,0)$: it is not hard to check (cf. Remarks \ref{rem: reduced} and \ref{rem: Zp-covers}) that for all $\chi\notin \Gamma^{\perp}$ one has  one has $L_{\chi}\equiv 2h$ and therefore $h^0(K_Y+L_{\chi})=0$. Using \eqref{eq: hi} one computes $q(X)=0$ and $p_g(X)=4$. By Remark \ref{rem: omega} the canonical map of $X$ is composed with the quotient map $X\to Z:=X/\Gamma$. The surface $Z$ is a simple  $\Z_5$-cover of the plane branched over the union $D$ of the 5 branch lines; 
its singularities are $A_4$-points, occurring over the singular points of  $D$. It is immediate to see $Z$ is isomorphic to a quintic in $\pp^3$, so it is a canonically embedded surface and the canonical map of $X$ has degree 5. 
 \end{ex}
\begin{ex}[Case (B), $d=2$, unbounded $p_g$]  \label{ex2 bin} 

This example is  one of the examples constructed by  Nguyen Bin in \cite{bin2}.

We take $G=\Z_2^3$ and $Y=\pp^1\times \pp^1$; we denote by $\chi_1,\chi_2,\chi_3$ the canonical basis of the space of characters,  by $F_1$ and $F_2$ the classes of the two rulings of $Y$.  We consider  the $G$-cover $f\colon X\to Y$ given by the following choice of reduced building data:
\begin{itemize}
 \item[--]  $D_{100}$, $D_{101}\in |2F_1+2F_2|$, $D_{110}\in |2mF_1|$,  $D_{111}\in |2nF_2|$ with $m,n \ge 2$, and $D_v=0$ otherwise;
 \item[--] $L_{100}\equiv (m+2)F_1+(n+2)F_2$, $L_{010}\equiv mF_1+nF_2$, $L_{001}\equiv F_1+(n+1)F_2$.
\end{itemize}
In addition we assume that the divisor $D=\sum_vD_v$ is a simple normal crossings divisor. The surface $X$ is smooth by Proposition \ref{prop: smoothcover}; by \eqref{eq: adjunction} we have  $2K_X\equiv f^*(2mF_1+2nF_2)$ and therefore $X$ is minimal of general type with $K^2_X=16mn$. 
Denote by $\Gamma$ the subgroup generated by $(001)$. Writing out \eqref{eq: p-power} for all the remaining  characters of $G$, one sees that for every $\chi$ one has: (1) $L_{\chi}\equiv aF_1+bF_2$ with $a,b>0$ for every $0\ne \chi\in G^*$;  (2) if $\chi\notin \Gamma^{\perp}$ then either $a$ or $b$ is equal to 1. Condition (1) gives $q(X)=0$ (cf.  \eqref{eq: hi}). Condition (2) implies that $h^0(K_Y+L_{\chi})=0$ for all $\chi\notin \Gamma^{\perp}$,  
so by Remark \ref{rem: omega} the canonical map of $X$ is composed with the quotient map $X\to Z:=X/\Gamma$. The surface $Z$ is a $\Z_2^2$-cover with  building data:
\begin{itemize}
 \item[--]  $D_{10}=D_{100}+D_{101}$  and $D_{11}=D_{110}+D_{111}$ and $D_{01}=0$;
 \item[--] $L_{10}\equiv (m+2)F_1+(n+2)F_2$, $L_{01}\equiv mF_1+nF_2$ and $L_{11}\equiv 2F_1+2F_2$
  \end{itemize}
The singularities of $Z$ are $A_1$ points occurring above the singular points of $D_{10}$ and $D_{11}$.
The cover $Z\to Y$ can be factored as a composition of two double covers $Z\to W:=Z/\Gamma \to Y$, where $\Gamma$ is the subgroup generated by $(11)$. The double cover $h\colon W\to Y$ is branched on $D_{10}\equiv  4(F_1+F_2)$, so $W$ is  a K3 surface with singularities of type $A_1$. The double cover $p\colon Z\to W$ is a flat double cover  given by the relation $2L\equiv B$, where $L:=h^*L_{01}$ and $B= h^*(D_{11})$.   We have $K_Z\equiv p^*(K_W+L)\equiv p^*L$, so $K^2_Z=8mn$ and $p_g(Z)=h^0(K_W)+h^0(K_W+L)=1+(2+2mn)=3+2mn$. 

The last step is  to prove that the canonical map of $Z$ is birational. One looks first at the system $|L|$ on $W$: the line bundle $L$ is the pull back of the  very ample line bundle  $mF_1+nF_2$ of $Y$ and in addition $h^0(L)=2mn+2>(m+1)(n+1)=h^0(mF_1+nF_2)$, so $|L|$ gives a birational map. A similar argument applied to $K_Z=p^*L$ shows that $|K_Z|$ is birational. 

The invariants of $X$ are: $K^2_X=16mn$, $p_g(X)=mn+3$ and $q(X)=0$.
 \end{ex}

\section{Remarks and open questions} \label{sec: questions}

The examples of the previous section show the great variety of possible behaviours of the canonical map. In this final section we try to give a  synthesis of the situation and formulate some questions. 
\subsection{Case (A) of Theorem \ref{thm: be-pg} ($p_g(\Sigma)=0$)} 

We have seen in  \S \ref{ssec: bounds} that if $p_g$ is large enough ($\ge 30$) the degree $d$ of the canonical map is at most 8. 

 Unbounded families of examples are known for all the even values $d\le 8$. 
As explained in \S \ref{sec: examples}, Example \ref{ex2A}, for $d=2$ there are examples with any  $p_g$  obtained by taking suitable double covers of surfaces with $p_g=0$.  

The product of two hyperelliptic curves gives rise to  examples with unbounded $p_g$ and $d=4$.  

Examples \ref{ex: GpairA},  \ref{ex6}  and \cite[Exemple 4.4]{be}  give   families with $d=6$ and  unbounded $p_g$.
 
As already mentioned in \S \ref{ssec: bounds},  \cite{be}  (see Example \ref{ex8 Be})   gives a family with $d=8$,  unbounded $p_g$ and $q=3$, whilst more recently several  such families with $q=0$ and $q=1$ have been  constructed in \cite{bin8}  as $\mathbb Z_2^3$-covers of  $\pp^2$ blown-up in one point. 

On the other hand,  for $d=3,5,7$ only sporadic examples are known  so far,  all having  relatively small $p_g$.  In \cite{triple}  surfaces with canonical map of degree 3, having a smooth canonical curve and whose image is a surface of minimal degree $n-1$ in $\pp^n$   are completely classified and explicitly constructed.  In particular surfaces with $K^2\leq 3p_g-5$ and canonical map of degree 3 necessarily have $p_g\leq 5$ (cf. \cite{zucconi3}).    So we are led to ask: 

\begin{qst} For $d=3,5,7$, are the invariants of surfaces  in  case (A) of Theorem \ref{thm: be-pg} bounded? 
\end{qst}

Another interesting situation  is $p_g=3$, since in this case the bound on the degree is very large, i.e. $d\le 36$ (cf. \S \ref{ssec: bounds}).  
\begin{qst} For $p_g=3$, are there examples for every  possible  $2\le d \le 36$? 
\end{qst}

For $d\leq 9$, $q=0$ the answer is affirmative (see, e.g., Example \ref{ex: pg}). As already mentioned in \S  \ref{ssec: image}, the limit values $36$ for $q=0$ and $27$ for $q=1$ given by  inequality  \eqref{ineA} were shown to be effective in \cite{Rito36}.    In recent times this question has been the subject of intense activity and,  without being exhaustive, we would like to mention  the example in \cite{Rito24}  with   $d=24, K^2=24,p_g=3, q=0$ and    the examples in  \cite{GPR}  with $d=32, K^2=32, p_g=3, q=0$ and $d=24, K^2=24, p_g=3,  q=1$.  However we believe that there are many gaps to be filled, for instance  many of  the cases of odd degree. 
 \smallskip 

 We note that if $p_g=q=3$ the possible degrees allowed by  inequality \eqref{ineA} vary between 2 and 9  but by the classification of surfaces with $p_g=q=3$ (see \cite{ha}, \cite{pi}) only $d=4$, $d=6$ and $d=8$  occur (see \cite{ccm}).  
 
  \bigskip

Many other such questions  arise. For instance:

\begin{qst} What pairs $(p_g, d)$  with $d\geq 9$ actually occur? \end{qst} 

The bounds on $p_g$ given by inequality \eqref{ineA} for $d\geq 10$ and the one for $d=9$ by \cite{xiaodeg}  (see \S \ref{ssec: image}) may not be the best possible and it would be interesting to know how sharp they are. 

  As an example, for $d=16$ we know,  by inequality \eqref{ineA}, that $p_g\leq 5$ and if $p_g=5$ then $q=0$. However although  examples with $d=16$ are known for $p_g=3, q=0$ (see Example \ref{ex: 16}),  $p_g=3, q=2$ (\cite{Rito16}) and $p_g=4,q=1$ (\cite{bin16}) the question of the existence of a surface with $d=16$ and $p_g=5$ is still open.

\subsection{Case (B) of Theorem \ref{thm: be-pg} ($p_g(\Sigma)=p_g(X)$)} 

The case when the canonical map has degree $d>1$ and its image is a canonically embedded surface is perhaps the most intriguing one. We have seen in \S \ref{ssec: bounds} that for $p_g\geq 13$  one has $d\le 3$ and that    there are unbounded sequences of such surfaces with $d=2$ (Examples \ref{ex: Gpair1}, \ref{ex: Gpair2}, \ref{ex: Gpair3}, \ref{ex2 bin}).

 On the other hand,  for $d=3$ the  examples we are aware of (Examples \ref{ex3 tan} and \ref{ex3 rita}, \cite{tan}, \cite{Rito3}, \cite{bin3}) satisfy $p_g\leq 8$. So the first question one is led to ask is: 

\begin{qst} For $d=3$, are the invariants of surfaces  in  case (B) of Theorem \ref{thm: be-pg} unbounded? 
\end{qst}
Note that possible   examples have    irregularity $q\le 3$ (see \S \ref{ssec: bounds}).  This shows 
that one cannot hope to construct a sequence of examples using a generating pair as in \S \ref{ssec: Gpair}:  by Corollary  \ref{cor: Gpair}  the general curve of $|L|$ would have genus  $g\ge 3$ and so by Lemma \ref{lem: invariants-Gpair} the examples would have irregularity $\bar g-g\ge 2(g-1)\ge 4$, a contradiction. 
In addition, for $d=3$ the only possible accumulation point for the slope $K^2/\chi$  is 9. 

These remarks suggest the following questions: 
\begin{qst} For $d=2$ is there an upper bound for the irregularity of a   surface  in  case (B) of Theorem \ref{thm: be-pg}? 
\end{qst}
\begin{qst}
For $d=2$ what  are  the accumulation points of the slopes $K^2/\chi$ of surfaces  in  case (B) of Theorem \ref{thm: be-pg}?
\end{qst}
We know by inequalities \eqref{acB2} and \eqref{acB3} of  \S \ref{ssec: bounds} that these accumulation points must lie in the interval $[6,9]$.  The sequences of surfaces constructed in Examples \ref{ex: Gpair1}, \ref{ex: Gpair2}, \ref{ex: Gpair3} have slopes tending to $6, \frac{32}{5},\frac{48}{7}$, respectively; the examples in \cite{cirifra}  are obtained from these by taking the quotient by a suitable $\Z/3$ action and their slopes have the same accumulation points.   Example \ref{ex2 bin} and the other examples of \cite{bin2}   have slopes tending to $8$.
\bigskip

For small values of $p_g$, in principle  it is possible to have  $3<d\le 9$ in case (B); more precisely we have the following bounds (see \ref{ssec: bounds}): 
\begin{itemize}
\item[--] if  $d= 4$, then $p_g\le 9$;
\item[--] if  $d= 5$, then $p_g\le 7$;
\item[--] if $d=6$, then $p_g\le 5$;
\item[--] if $d=7, 8 $ or $9$, then $p_g=4$.
\end{itemize} 

However except for 
Example \ref{ex5}  that  satisfies $d=5$ and $p_g=4$  we do not know other examples with $d>3$; the obvious question is:
\begin{qst}
For what pairs $(d,p_g)$, with $d>3$,  are there examples of surfaces in  case (B) of Theorem \ref{thm: be-pg}?
\end{qst}

\bigskip

\bigskip

\begin{minipage}{13cm}
\parbox[t]{6.5cm}{Margarida Mendes Lopes\\
Departamento de  Matem\'atica\\
Instituto Superior T\'ecnico\\
Universidade de Lisboa\\
Av.~Rovisco Pais\\
1049-001 Lisboa, PORTUGAL\\
mmendeslopes@tecnico.ulisboa.pt
  } \hfill
\parbox[t]{5.5cm}{Rita Pardini\\
Dipartimento di Matematica\\
Universit\`a di Pisa\\
Largo B. Pontecorvo, 5\\
56127 Pisa, Italy\\
rita.pardini@unipi.it}
\end{minipage}


\begin{thebibliography}{ABCDE} 
 \bibitem[Be79]{be} A. Beauville, {\em L'application canonique pour les surfaces de type g\'en\'eral}. Inv. Math. {\bf 55} (1979), 121--140.
\bibitem[BL94]{BL} Ch.~Birkenhake, H.~Lange, {\em A family of abelian surfaces and curves of genus four}, Manuscr. math., 85 (1994), 393--407.
\bibitem[Ca81]{babbage} F. Catanese, {\em Babbage's conjecture,  contact of surfaces, symmetric determinantal varieties and applications},  Invent. Math.,
{\bf 63} (1981), 433--465.
\bibitem[Ca87]{Cat} F.~Catanese, {\em Canonical rings and ``special" surfaces of general type},  Proceedings of Symposia in Pure Mathematics, vol. {\bf 46}
(1987), 175--194.
\bibitem[CCM98]{ccm} F.~Catanese, C.~Ciliberto, M. Mendes Lopes {\em On the classification of irregular surfaces  of general type with non birational bicanonical map}, Trans. Amer. Math. Soc. {\bf 350} (1) (1998), 275--308.
\bibitem[CPT00]{cirifra} C. Ciliberto, R. Pardini, F. Tovena, {\em Prym
varieties and the canonical map of surfaces of general type},  Annali della Scuola Normale
Superiore di Pisa. Classe di Scienze, XXIX (2000), 905--938.
 \bibitem[CPT03]{cirifra2} C. Ciliberto, R. Pardini, F. Tovena, {\em Regular canonical covers}, Math. Nachrichten 251 (2003), 19-27. 
\bibitem[De82]{debarre} O. Debarre, {\em In\'egalit\'es num\'eriques pour les surfaces de type g\'en\'eral}, with an appendix by A. Beauville, Bull. Soc. Math. France {\bf 110} 3 (1982),  319--346.
 \bibitem[GPR]{GPR} C.~Gleissner, R.~Pignatelli, C.~Rito,
 {\em New surfaces with canonical map of high degree},
Commun. Anal. Geom, to appear, arXiv:1807.11854 [math.AG]
\bibitem[HP]{ha} C. Hacon, R. Pardini, {\em Surfaces with $p_g=q=3$},  Trans. Amer. Math. Soc. {\bf 354} (7) (2002), 2631-2638.
\bibitem[Ho]{ho1} E. Horikawa, {\em Algebraic surfaces of general type with small $c_1^2$, I}, Annals of Mathematics, {\bf 104} (1976), 357--387.
\bibitem[MP98a]{nagoya} M.~Mendes Lopes, R.~Pardini, {\em Irregular canonical double surfaces}, Nagoya J. of Math., 152 (1998), 203-230.
\bibitem[MP98b] {triple} M.~Mendes Lopes, R.~Pardini, {\em Triple canonical  surfaces of minimal degree},
International Journal of Mathematics,  {\bf 152} (1998), 203-230.
\bibitem[Ng19a]{bin16} Bin~Nguyen,  {\em A new example of an algebraic surface with canonical map of degree 16}, Archiv der Mathematik, volume 113, 385--390(2019)
\bibitem[Ng19b]{bin3} Bin~Nguyen,  {\em New examples of canonical covers of degree 3}, Math. Nach., to appear,  arXiv:1910.03526 [math.AG]
\bibitem[Ng20]{bin8} Bin~Nguyen,  {\em Some unlimited families of minimal surfaces of
general type with the canonical map of degree 8},  manuscripta math. {\bf 163}, 13--25 (2020). 
\bibitem[Ng21]{bin2} Bin~Nguyen, {\em Some infinite sequences of canonical covers of degree 2}, Advances in Geometry,  {\bf 21}  (1) (2021), 143--148.
\bibitem[Pa91a]{ritaabel} R. Pardini, {\em Abelian covers of algebraic varieties},   J. reine angew. Math.  {\bf 417} (1991), 191--213.
\bibitem[Pa91b] {supcan} R. Pardini, {\em Canonical images of surfaces}, J.
reine angew. Math., {\bf 417} (1991), 215--219.
\bibitem[Pe77]{Pe} U.~Persson,  {\em Double coverings and surfaces of general type}, Algebraic
geometry (Proc. Sympos., Univ. Troms\o, Troms\o, 1977), vol. 687 of
Lecture Notes in Math., Springer, Berlin (1978), 168--195.
\bibitem[Pi02]{pi} G.P.~Pirola, {\em Surfaces with $p_g=q=3$}, Manuscripta Math., {\bf 108} (2002), 163--170. 
\bibitem[Ri16]{Rito3} C.~Rito, {\em Cuspidal quintics and surfaces with $p_g=0$, $ K^2=3$ and 5-torsion},
LMS J. Comput. Math. {\bf 19} (1)  (2016), 42--53.
\bibitem[Ri17a]{Rito16} C.~Rito, {\em A surface with q=2 and canonical map of degree 16}, Michigan Math. J., {\bf 66}  (1),  (2017), 99--105.
\bibitem[Ri17b] {Rito24}C.~Rito,
 {\em A surface with canonical map of degree 24},
Int. J. Math. {\bf 284}  (6),  (2017), 75--84.
\bibitem[Ri19]{Rito36} C.~Rito, {\em Surfaces with canonical map of maximum degree}, Journal of Algebraic Geometry, to appear,  	arXiv:1903.03017 [math.AG]
\bibitem[Ta92]{tan}  S.~Tan {\em Surfaces whose canonical maps are of odd degrees}, Math. Ann. {\bf 292} (1992), 13---29.
\bibitem[Xi85]{xiaopencil} G. Xiao, {\em L?irr\'egularit\'e des surfaces de type g\'en\'eral dont le syst\`eme canonique est compos\'e d?un pinceau},  Compositio Math., {\bf 56} (2):251--257, 1985.
\bibitem[Xi86]{xiaodeg} G. Xiao, {\em Algebraic surfaces with high canonical degree}, Math. Ann. {\bf 274} (1986), 473-483.
\bibitem[Zu97]{zucconi3} F. Zucconi,  {\em Surfaces with  canonical map of degree 3 and $K^2=3p_g-5$}, Osaka J. Math.,  {\bf 34} (2), (1997), 411--428.
\end{thebibliography}
     \end{document}